\RequirePackage{fixltx2e}
\documentclass[10pt]{article}  
\usepackage[utf8]{inputenc}
\usepackage[T1]{fontenc}
\usepackage{amsmath}
\DeclareFontFamily{U}{mathx}{}
\DeclareFontShape{U}{mathx}{m}{n}{<-> mathx10}{}
\DeclareSymbolFont{mathx}{U}{mathx}{m}{n}
\DeclareMathAccent{\widehat}{0}{mathx}{"70}
\DeclareMathAccent{\widecheck}{0}{mathx}{"71}

\usepackage{float}
\usepackage{amsmath,amssymb,amsfonts,graphicx,bm}
\usepackage{booktabs}
\usepackage{hyperref}
\hypersetup{colorlinks=false,pdfborder={0 0 0}}
\usepackage{geometry}
\usepackage{amsthm}
\usepackage{color}
\usepackage[normalem]{ulem}
\newtheorem{claim}{Claim}[section]
\newtheorem{proposition}[claim]{Proposition}
\newtheorem{corollary}[claim]{Corollary}
\theoremstyle{remark}
\newtheorem{remark}{Remark}[section]

\numberwithin{equation}{section}
\numberwithin{figure}{section}
\numberwithin{table}{section}

\newcommand \bei {\begin{itemize}}
\newcommand \eei {\end{itemize}}

\newcommand \bel{\begin{equation}\label}
\newcommand \del \partial
\newcommand \be {\begin{equation}}
\newcommand \ee {\end{equation}}
\newcommand \bse {\begin{subequations}}
\newcommand \ese {\end{subequations}}

\newcommand \eps \epsilon
 
\definecolor{darkblue}{RGB}{0,0,139}

\begin{document}

\title{Physics-informed token transformer methodology for nonlinear balance laws. I. 
Schwarzschild--Burgers fluid flows}

\author{\begin{tabular}{c}
\large Philippe G. LeFloch
and Shuyang Xiang
\\[0.5em]
Laboratoire Jacques-Louis Lions
\\
and Centre National de la Recherche Scientifique,
\\
{{Sorbonne Universit\'e, 4~Place Jussieu, 75252 Paris, France.}}\\
{{Emails: {\sl contact@philippelefloch.org, vanillaxiangshuyang@gmail.com}.}}
\end{tabular}}

\maketitle


\begin{abstract}
We introduce a Physics-Informed Token Transformer (PITT) methodology for nonlinear
hyperbolic balance laws in one space dimension, using piecewise
steady-state profiles for the representation of approximate weak solutions. The method combines
symbolic equation tokenization, a Fourier neural operator encoder, an
explicit Rankine--Hugoniot law for shock motion, and a learned correction
term. For clarity, we present it here for the relativistic Schwarzschild--Burgers equation, a
scalar model for spherically symmetric fluid flows on a Schwarzschild
background. For this model the steady-state invariant and the generalized
Riemann solutions are explicit, and they can therefore be built into the
neural evolution. In particular, the leading discontinuities are advanced by
the analytical jump condition, while the learned part reconstructs smooth
regions, rarefaction fans, geometric dependence, and finite-resolution
effects. The method is designed to locate wave fronts accurately and to preserve the relevant steady states. We test our PITT method on moving shocks, stationary shocks, rarefaction
waves, and compare it with a standard high-order finite-volume approximation. 
We also analyze the standard Burgers limit (when the Schwarzschild mass tends to zero). 
The Rankine--Hugoniot prior plays the dominant role in these tests, while
equation tokenization gives a systematic additional gain. The method is relevant for problems involving geometric effects and/or complex shock-wave dynamics, and is used here to study the long-time dynamics of perturbations of steady-state solutions. In particular, we exhibit an asymptotic law of propagation for the shock location of perturbed steady-state flows. 
\end{abstract} 


{\small
 
\setcounter{secnumdepth}{2} 
\setcounter{tocdepth}{2}
\tableofcontents

} 


\section{Introduction}
\label{section=1}

\subsection{Main purpose}
\label{section=1--1}

\subsubsection{Discontinuities and admissibility}

Learning discontinuous solutions of nonlinear hyperbolic conservation laws
and balance laws remains a major challenge for machine learning techniques, especially when one seeks accurate wave-front locations and physically admissible weak solutions with complex structure. 
Neural operators and PINN-type models\footnote{The reader not familiar with the machine learning terminology can refer to \autoref{section=A}.} are effective for many smooth
evolution problems, but shock waves, contact discontinuities, and
rarefaction fans require additional structure from the theory of weak
solutions~\cite{Lax1957,Lax1973,Dafermos2010,LeFloch2002}, including Rankine--Hugoniot jump relations, entropy admissibility criteria, and Riemann problems.
Without imposing such structure, smooth neural approximations tend to smear jumps,
misplace shock fronts, or violate conservation near discontinuities; in balance laws, they may also confuse genuine dynamics with source-driven or geometric effects. Fourier neural operators~\cite{Li2021} and
DeepONets~\cite{Lu2021} are designed to learn mappings between function
spaces{, within the broader neural-operator viewpoint~\cite{Kovachki2023}},
while transformer-based architectures introduce \emph{global} interactions between
spatial tokens~\cite{Vaswani2017,Cao2021}. These representations are useful, but they do not by themselves encode the admissibility mechanisms that determine discontinuous solutions of hyperbolic problems.


\subsubsection{Equation tokenization as physics conditioning}

The Physics-Informed Token Transformer approach introduced by Lorsung et
al.~\cite{Lorsung2024}, hereafter abbreviated as PITT, provides a
natural way to inject the structure of a partial differential equation (PDE) into
a neural update operator, not only through a \emph{residual loss} but also
through the \emph{internal representation} used by the model. Its main idea is to
represent differential operators, unknowns, parameters, and boundary or
initial data through \emph{symbolic equation tokens} processed by a
Transformer. The resulting physics representation imposes restrictions on the learned
time evolution and improves generalization across equations and parameter
regimes; namely, the network receives both sampled solution data and a
\emph{structured description} of the equation that produced them. In the present
work we retain this tokenization principle, but adapt it to a substantially
different regime: nonlinear hyperbolic balance laws whose physically
relevant solutions contain shocks and other discontinuities, so that
the symbolic representation must be coupled with admissibility and
wave-propagation mechanisms from the theory of weak solutions.


\subsubsection{A Rankine--Hugoniot prior for shock motion}

At the numerical level, the main point is to couple the tokenized equation
to an explicit Rankine--Hugoniot law and to \emph{equilibrium variables}
adapted to describe steady-state solutions to the underlying balance law. The shock speed is not learned from
data: it is computed from the analytical jump condition associated with the
flux structure, using the left and right traces selected
by the steady-state parametrization. The network is then used for the
remaining part of the evolution, namely the smooth or piecewise smooth
field reconstruction, the dependence on physical and geometric parameters,
and a correction for finite-resolution nonlinear and geometric effects.
Thus the motion of the discontinuity is separated from the neural field
approximation while the training data are used to resolve the
regular part of the solution.


\subsubsection{A class of fluid models with complex structure}

The approach is intended for nonlinear balance laws arising in classical
and relativistic fluid dynamics, especially when the difficulty comes from
curved geometry, non-trivial equilibria, systems with several wave
families, or interacting discontinuities. The scalar
\emph{Schwarzschild--Burgers equation} proposed by LeFloch et al.~\cite{LeFlochMakhlofOkutmustur2012,LeFlochMakhlof2014} 
is used here as a model problem for developing our PITT methodology: it admits explicit steady-state invariants and generalized Riemann solutions, while retaining source terms, stationary shocks, moving
shocks, and long-time relaxation. These exact structures are incorporated
directly into the representation, so that stationary solutions and
stationary discontinuities are treated as part of the model rather than as features to be captured numerically by an external scheme. The corresponding treatment of Schwarzschild--Euler
fluid flows is deferred to a follow-up paper~\cite{LeFlochXiang-2026b}.


\subsubsection{Relation with physics-informed neural methods}

This paper is related to the methodology of physics-informed neural networks (PINNs), such as~\cite{Raissi2019},
and their extensions to discontinuous solutions proposed in 
\cite{Mao2020,DeRyck2024,Liu2023,FerrerSanchez2024,Oubarka2026}.
Recent proposals concern incorporating Godunov or Riemann information,
conservative flux constraints, anti-diffusive evolution, and neural operators
adapted to hyperbolic problems~\cite{Cassia2025,Y-Liu-1,Kim2025,Liu2024,Liu2025,Lorin2024,
Patsatzis2024,Peyvan2024,Wang2025}. Time-continuous reconstruction of
fluid simulations is another related learning task~\cite{Pronk2025}.
These methods enrich the training loss with \emph{differential residuals} or
\emph{modified weights} near large gradients of the solutions, and have produced accurate results
for Riemann problems and hydrodynamic discontinuities. Our
strategy is different: the main hyperbolic constraint is imposed \emph{at the level of the
rollout mechanism itself.} In particular, the Rankine--Hugoniot condition
is not only penalized in a loss function but is \emph{used as an explicit
dynamical prior inside the architecture}. The resulting method combines
symbolic equation tokenization, neural operator encoding, equilibrium
variables, and analytical shock dynamics in a single construction for
discontinuous solutions of nonlinear hyperbolic balance laws,
thereby connecting operator learning with the classical structure of
front-tracking and generalized Riemann solvers.


\subsection{Main results}
\label{section=1--2}

\subsubsection{\texorpdfstring{A scalar geometric test model}{A scalar geometric test model}}

The proposed PITT method is designed to incorporate information from both the
governing equations and the underlying physics. The main objective
of the present article is therefore to develop the methodology in a scalar but genuinely
geometric setting, namely for Schwarzschild--Burgers fluid flows. This
model is chosen because the construction can be introduced without the
additional complications of a full fluid system. It already contains
shocks, steady states, curved-space source terms and, as discovered earlier by the authors~\cite{LeFloch2016,LeFloch2014,LeFloch2019}, it admits \emph{explicit formulas
for generalized Riemann problems}. The same strategy is expected to apply to more
complex fluid models (even without admitting such explicit formulas); in particular, the extension of our PITT method to the Schwarzschild--Euler system
is treated separately in the companion article~\cite{LeFlochXiang-2026b}. This latter model is also considered from the standpoint of well-posed theory in~\cite{BakhtinLeFloch2018,Liu2025b}. 

More generally, the strategy presented in this paper applies immediately to one-dimensional hyperbolic conservation laws and balance laws, of the general form
\be
\label{general-balance-law}
\frac{\partial u}{\partial t}
+
\frac{\partial}{\partial r}(F(u,r))
=
S(u,r),
\ee
where the flux term $F=F(u,r)$ and the source term $S=S(u,r)$ may depend on
the unknown $u=u(t,r)$ and on the spatial variable $r$. This formulation
includes standard conservation laws, for which \(S\equiv0\), as well as
geometric balance laws arising in classical or in relativistic
fluid dynamics on curved backgrounds. When its steady states and local wave curves are available, one can use the same strategy to build generalized Riemann solvers and shock-aware neural rollouts. 

The relativistic Burgers equation on a Schwarzschild background provides one of the simplest, yet non-trivial, examples where the geometry introduces non-trivial source terms and modifies the
structure of steady states. In contrast with the flat case, constant states
are no longer the natural equilibria. Instead, the relevant elementary
building blocks are steady-state profiles determined by geometric
invariants. This motivates a representation of discontinuous solutions as
piecewise steady states separated by moving or stationary shocks,
and leads to a closed description of generalized Riemann solutions in terms
of the left and right steady-state parameters.

\subsubsection{From finite-volume to front-tracking to a shock-aware architecture}

A standard numerical approach to compute shock solutions is based on a finite volume discretization
and on the construction of numerical fluxes ensuring a well-balanced property. Our approach is different, since we use this mathematical structure to design a neural architecture in which the essential physical mechanisms are built \emph{into the model} rather than inferred only from training samples.
Yet, the proposed PITT construction borrows from this standard theory of
finite-volume and geometry-preserving methods for balance laws going back to classical works such as 
\cite{vanLeer1979,Harten1983}; for developments on well-balanced schemes, we refer to the monograph~\cite{Bouchut2004} and the references therein.  Note also that a class of well-balanced schemes for the Schwarzschild--Burgers equation was introduced earlier in~\cite{lefloch2021classwellbalancedalgorithmsrelativistic}. 

The key architectural point is that the
Rankine--Hugoniot shock speed can be expressed in terms of \emph{steady-state
invariants} defined on the two sides of the moving discontinuity. Once these left and right
invariants are known, the jump condition determines the shock trajectory.
The proposed PITT architecture therefore infers the relevant invariant
parameters from the initial field and uses them as latent physical
coordinates. These coordinates are combined with symbolic equation tokens,
Fourier neural operator features, and a learned residual correction to
reconstruct the full spatio-temporal evolution. The residual component is
\emph{not} responsible for determining the shock speed; instead, it
\emph{corrects} unresolved nonlinear, geometric, and approximation effects around
the analytically prescribed shock dynamics. In this way, the
architecture \emph{learns a correction} to a mathematically meaningful prior,
instead of representing the whole discontinuous dynamics statistically.

\subsubsection{Beyond a single Riemann problem}

For general initial data, we combine the shock-aware PITT representation with a Glimm-type decomposition. The initial profile is approximated by piecewise constant or piecewise steady states, and the
evolution is represented by interacting local Riemann problems. This gives a natural way of treating multiple shocks, rarefaction waves, contact discontinuities, and wave interactions. The neural component is
thus guided by the same local wave structure that underlies classical front-tracking and random choice methods, while retaining the flexibility of operator learning. This provides the route used in the numerical
section for initial data with many discontinuities.

\subsubsection{Complex shock dynamics}

We test the proposed method on moving shocks, stationary shocks, rarefaction waves, and we investigate the
non-relativistic limit, long-time asymptotic regimes, and perturbations of
steady shocks.  Among our results, in \autoref{section=7} we observe numerically a mass-conservation displacement law (cf.~\autoref{thm:displacement}), which formally follows from a \emph{conservative form} of the balance law, and is used here as a diagnostic for our PITT predictions. 

The numerical tests are designed to separate the analytical prior from the learned
correction: steady-state invariants, generalized Riemann structure, and the
Rankine--Hugoniot shock relation are inserted into the rollout, while the scalar
model provides exact references against which learning errors can be separated from
errors introduced by an external numerical solver. The companion paper~\cite{LeFlochXiang-2026b} addresses the extension to Schwarzschild--Euler fluid flows.


\subsection{Organization of this paper}
\label{section=1--3}

The paper is organized as follows. In \autoref{section=2}, we present analytical facts about the Schwarzschild--Burgers equation used in the course of this paper; in particular, we define the specific variables supplied to the neural model. Next, in \autoref{section=3} we present the PITT architecture, with equation tokenization, Fourier neural operator encoding, parameter conditioning, physics-informed update blocks, and the Rankine--Hugoniot shock prior.  In \autoref{section=4}, we describe the practical implementation:
data generation, a high-order standard finite-volume baseline, implementation choices, and metrics. 
Next, in \autoref{section=5} we show our numerical validation, including comparisons with this baseline, ablation studies, and the flat-spacetime Burgers limit. \autoref{section=6}
treats more general piecewise steady-state data and their long-time asymptotics, which leads us, in 
\autoref{section=7}, to derive and numerically demonstrate the validity of a mass-conservation formula for
the \emph{asymptotic displacement} of perturbed steady shocks. Finally,
\autoref{section=8} gives concluding remarks, and Appendix~\ref{section=A} provides a brief dictionary of the learning terminology used in this paper. 


\section{The Schwarzschild--Burgers model and its steady-state structure}
\label{section=2}

\subsection{The model of interest} 
\label{section=2--1}

\subsubsection{Origin and scope of the model}

We begin with the Schwarzschild--Burgers model posed on the exterior domain of a Schwarzschild
black hole, which is one of the simplest, yet non-trivial, models among curved-background fluid models. This equation was derived as a relativistic Burgers
equation on curved spacetimes in
\cite{LeFlochMakhlofOkutmustur2012} and was then studied, both
analytically and numerically, in \cite{LeFloch2016,LeFloch2019,LeFloch2014,
lefloch2021classwellbalancedalgorithmsrelativistic}. In the present
article, the model is used as a controlled test case: it
retains the essential interaction between nonlinear waves, a geometric
source term, steady states, and an event horizon, while still allowing
explicit formulas that can be built into the PITT architecture. 


\subsubsection{\texorpdfstring{Conservative and balance-law forms}{Conservative and balance-law forms}}

{More precisely, we consider the relativistic Burgers equation on
the Schwarzschild exterior $r>2M$:}
\be
\label{burgers-schwarzschild}
\partial_t\!\left(\frac{u}{(1-2M/r)^2}\right)
+ \partial_r\!\left(\frac{u^2-1}{2(1-2M/r)}\right) = 0,
\ee
{where $u=u(t,r)\in[-1,1]$ denotes the radial fluid velocity, the
speed of light has been normalized to $1$, and $M>0$ is the mass of the
Schwarzschild black hole. The reduction to one space variable is obtained
under spherical symmetry. In the flat limit $M\to0$, the metric
coefficient below tends to $1$, and \eqref{burgers-schwarzschild}
reduces, up to an irrelevant additive constant in the flux, to the
standard inviscid Burgers equation
$\partial_t u+\partial_r(u^2/2)=0$. Equivalently,
\eqref{burgers-schwarzschild} can be written as the balance law}
\be
\label{burgers-2}
\frac{\partial u}{\partial t} + \frac{\partial F}{\partial r} = S,
\ee
where
\be
\label{symbols}
F = \frac{g(u^2-1)}{2}, \quad
S = \frac{2M}{r^2}(u^2-1), \quad
g(r) := 1 - \frac{2M}{r}.
\ee
{This formulation is the one used in the sequel, since it exposes
both the hyperbolic flux and the geometric source term that must be
respected by a well-balanced or physics-informed method.}


\subsubsection{\texorpdfstring{Steady-state solutions}{Steady-state solutions}}

{Setting $\partial_t u = 0$ reduces \eqref{burgers-2} to
$\partial_r F = S$. Using $F=g(u^2-1)/2$, $S=g'(u^2-1)$, and
$g'(r)=2M/r^2$, this stationary equation is equivalently written as}
\be
\label{eq:stationary-balance-ode}
\partial_r\!\left(\frac{u^2-1}{2(1-2M/r)}\right) = 0.
\ee
This ODE can be solved explicitly, giving the steady-state family
\be
\label{steady-state}
u_{K,\varsigma}(r)
     = \varsigma\sqrt{1 - K\,g(r)}
     = \varsigma\sqrt{1 - K\!\left(1 - \frac{2M}{r}\right)},
     \qquad \varsigma\in\{-1,+1\},
\ee
parameterized by a scalar $K$ and a sign branch $\varsigma$. The
Schwarzschild curvature imprints a
non-trivial radial profile on the flow. In the flat limit $M\to 0$,
\eqref{steady-state} reduces to the constant
$u = \varsigma\sqrt{1-K}$.

The parameter $K$ governs the global behavior of the steady state solution:
\begin{itemize}
\item For $0 < K \le 1$, the solution is defined for all $r > 2M$
      satisfying $|u| < 1$ (subsonic flow).
\item For $K > 1$, the solution exists only on the bounded interval
      $2M < r < 2MK/(K-1)$, where $|u|\to 1$ at the right endpoint.
\item For all $K$, as $r\to 2M^+$ we have $g\to 0$ and $|u|\to 1$,
      so the steady flow approaches the speed of light at the event horizon.
\end{itemize}

\subsubsection{\texorpdfstring{Rankine--Hugoniot condition}{Rankine-Hugoniot condition}}

For a shock wave with trajectory $t \mapsto r_s(t)$, the jump relation,
or Rankine--Hugoniot condition, associated with \eqref{burgers-2} is
\be
\label{RH-condition}
\sigma_{RH} = \frac{F_R - F_L}{u_R - u_L},
\ee
where $\sigma_{RH}:= \tfrac{d}{dt} r_s(t)$, and
$(u_L,F_L)$ and $(u_R,F_R)$ denote the left and right traces across the
discontinuity. {In this scalar setting, the source term does not
enter the jump relation itself; its effect is already encoded in the
spatial dependence of the flux and in the steady states used to define
the left and right traces.}


\subsection{A steady-state invariant}
\label{section=2--2}

\subsubsection{\texorpdfstring{Characteristic transport}{Characteristic transport}}

A key feature of \eqref{burgers-2} is that the following quantity
\be
\label{eq:K-def}
K(t,r) := \frac{1 - u^2}{g(r)}
\ee
is conserved along the characteristics of \eqref{burgers-2}. This
conservation law is the cornerstone for the closed structure of the
generalized Riemann problem and for the design of our PITT method.

\begin{proposition}[Conservation of the steady-state invariant]
\label{prop:K-conservation}
Let $u$ be a classical solution of \eqref{burgers-2} on a domain
$\mathcal{D}\subset\{t>0,\,r>2M\}$. The characteristic curves of
\eqref{burgers-2} are such that one has
\be
\label{eq:char}
\frac{dr}{dt} = g(r)\,u,
\ee
and along each characteristic one has
\be
\label{eq:K-constant-characteristic}
\frac{d}{dt}K\bigl(t,r(t)\bigr) = 0.
\ee
\end{proposition}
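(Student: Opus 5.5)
Our approach is to rewrite \eqref{burgers-2} in quasilinear (non-conservative) form, read off the characteristic speed, and then differentiate $K$ along a characteristic directly. For a classical solution we expand the flux derivative:
\[
\partial_r F = \partial_r\Bigl(\tfrac{g(u^2-1)}{2}\Bigr) = g\,u\,\partial_r u + \tfrac{g'}{2}(u^2-1).
\]
With $S=g'(u^2-1)$, where $g'=2M/r^2$, the equation becomes
\[
\partial_t u + g(r)\,u\,\partial_r u = \tfrac{g'(r)}{2}(u^2-1).
\]
The principal part is $\partial_t + g u\,\partial_r$. The characteristic curves are therefore the integral curves of $dr/dt = g(r)u(t,r)$, which is \eqref{eq:char}. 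Since $u$ is $C^1$ and $g$ is smooth on $r>2M$, standard ODE theory gives local existence and uniqueness of these curves inside $\mathcal D$. Along such a curve we have $\frac{d}{dt}u(t,r(t)) = \partial_t u + g u\,\partial_r u = \tfrac{g'}{2}(u^2-1)$.

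Next we compute $\frac{d}{dt}K(t,r(t))$ from $K=(1-u^2)/g$ by the chain rule:
\[
\frac{dK}{dt} = \frac{-2u\,\dot u}{g} - \frac{(1-u^2)\,g'\,\dot r}{g^2}.
\]
Substituting $\dot u = \tfrac{g'}{2}(u^2-1)$ and $\dot r = g u$ gives
\[
\frac{dK}{dt} = \frac{u g'(1-u^2)}{g} - \frac{u g'(1-u^2)}{g} = 0.
\]
This is \eqref{eq:K-constant-characteristic}. We use that $g>0$ on $r>2M$, so $K$ is well defined and smooth along the curve.

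An equivalent route is to check the PDE identity $\partial_t K + g u\,\partial_r K = 0$ pointwise, using $\partial_t u = -g u\,\partial_r u + \tfrac{g'}{2}(u^2-1)$, and then restrict it to the characteristics. Either way, the computation is elementary. The only step needing care is the correct expansion of $\partial_r F$, keeping the $g'$ term so that the source cancels exactly against the $r$-dependence of $g$ in $K$. This cancellation is the same structure as the stationary relation \eqref{eq:stationary-balance-ode}: $K$ is precisely the quantity made constant by the steady states \eqref{steady-state}, since $1-K g = u^2$ there.
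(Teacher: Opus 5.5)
Your proposal is correct and follows essentially the same route as the paper. Both arguments expand $\partial_r F$ to get $\dot u=\tfrac12 g'(r)(u^2-1)$ along $\dot r=g(r)u$, then apply the chain rule to $K=(1-u^2)/g$ and observe that the two resulting terms cancel exactly. Your derivation of the characteristic speed from the quasilinear principal part is a small explicit addition that the paper takes for granted.
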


\begin{proof}
\bse
Along a characteristic satisfying \eqref{eq:char}, the total time
derivative of $K = (1-u^2)/g(r)$ is
\be
\label{eq:proof-K-total-derivative}
\frac{dK}{dt}
  = \frac{-2u\,\dot{u}\,g(r) - (1-u^2)\,g'(r)\,\dot{r}}{g(r)^2},
\ee
where $\dot{u} = \partial_t u + \dot{r}\,\partial_r u$ is the material
derivative. Substituting $\dot{r} = g(r)u$ and using \eqref{burgers-2}
to write $\partial_t u = S - \partial_r F$, a direct computation gives
\be
\label{eq:proof-u-dot}
\begin{aligned}
\dot{u}
  &= S - \partial_r F + g(r)u\,\partial_r u \\
  &= \frac{2M(u^2-1)}{r^2}
     - \frac{g'(r)(u^2-1)}{2}
     - g(r)u\,\partial_r u
     + g(r)u\,\partial_r u \\
  &= (u^2-1)\!\left(\frac{2M}{r^2} - \frac{g'(r)}{2}\right).
\end{aligned}
\ee
Since $g'(r) = 2M/r^2$, we obtain $\dot{u} = \tfrac{1}{2}g'(r)(u^2-1)$.
Substituting back and using $\dot{r} = g(r)u$, we find
\be
\label{eq:proof-K-zero}
\begin{aligned}
\frac{dK}{dt}
  &= \frac{1}{g(r)^2}\Bigl[
       -2u\cdot\tfrac{g'(r)}{2}(u^2-1)\cdot g(r)
       - (1-u^2)\cdot g'(r)\cdot g(r)u
     \Bigr] \notag \\
  &= \frac{g'(r)g(r)u}{g(r)^2}
     \Bigl[-(u^2-1) - (1-u^2)\Bigr] = 0. \qedhere
\end{aligned}
\ee
\ese
\end{proof}


\subsubsection{Subluminal bound and jump relation}

An immediate result is that $K$ is transported along characteristics
without change, and in particular the velocity remains bounded.

\begin{corollary}[Subluminal property of the velocity]
\label{cor:bounded}
Let $u$ be a classical solution of \eqref{burgers-2}. Then one has
$|u(t,r(t))|\leq1$ along any characteristic and for all $t\geq0$.
\end{corollary}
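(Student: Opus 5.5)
The plan is to derive the bound directly from Proposition~\ref{prop:K-conservation}, together with the sign of $K$ at the initial time. Fix a characteristic $t\mapsto r(t)$ solving \eqref{eq:char} and issued from a point $r_0=r(0)>2M$. We take the initial datum to be admissible in the sense of the model, namely $|u(0,r_0)|\le1$; this is the standing assumption $u\in[-1,1]$ stated after \eqref{burgers-schwarzschild}. Since $g(r_0)=1-2M/r_0>0$ in the exterior region, the initial value of the invariant \eqref{eq:K-def},
\[
K_0:=\frac{1-u(0,r_0)^2}{g(r_0)},
\]
satisfies $K_0\ge0$.

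By Proposition~\ref{prop:K-conservation}, $K(t,r(t))=K_0$ for all $t\ge0$ for which the characteristic remains in the domain $\mathcal D\subset\{r>2M\}$ of the classical solution. Solving for the velocity gives
\[
u(t,r(t))^2 = 1-K_0\,g(r(t)).
\]
Because $r(t)>2M$, we have $g(r(t))>0$. Combined with $K_0\ge0$, this yields $u^2\le1$, i.e.\ $|u(t,r(t))|\le1$, with equality exactly when $K_0=0$, that is, along luminal characteristics.

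The main point to address is that the characteristic stays in $\{r>2M\}$, so that $g>0$ persists along it. For this I would invoke \eqref{eq:char}: as $r\to2M^+$ the speed $g(r)u$ tends to $0$, since $|u|$ is bounded, so the characteristic cannot reach the horizon in finite time. In any case, the statement only concerns the portion of the characteristic inside $\mathcal D$, where the proposition applies. A final remark is that the argument needs only $g>0$ and the nonnegativity of $K_0$. The bound is therefore simply a reformulation of the conservation of $K$, and holds for every $t\ge0$ along the characteristic.
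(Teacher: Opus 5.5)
Your proof is correct and follows the paper's argument: $K=(1-u^2)/g(r)$ is constant along characteristics, $K\ge0$ follows from the initial bound $|u_0|\le1$, and $g(r)>0$ for $r>2M$, so $1-u^2=K\,g(r)\ge0$. Your extra discussion of the horizon is not needed, since Proposition~\ref{prop:K-conservation} is stated on $\mathcal{D}\subset\{r>2M\}$; if you keep it, note that a vanishing speed alone does not stop a curve from reaching $r=2M$ in finite time, and what actually works is that $g$ vanishes linearly there, so $\frac{d}{dt}(r-2M)\ge-(r-2M)/(2M)$ and Gr\"onwall applies.
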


\begin{proof}
\bse
By Proposition~\ref{prop:K-conservation}, $K = (1-u^2)/g(r)$ is
constant along characteristics. Since $g(r) > 0$ for $r > 2M$ and
$K \geq 0$ by the initial condition $|u_0| \leq 1$, we have
$1 - u^2 = K\,g(r) \geq 0$, hence $|u| \leq 1$ for all time.
\ese
\end{proof}

The boundedness of $u$ ensures that the jump $[u] = u_R - u_L$ across
any discontinuity remains finite and nonzero as long as the two states
are distinct, so the Rankine--Hugoniot condition is well-defined. More
precisely, a discontinuity at $r_s(t)$ is a weak solution of
\eqref{burgers-2} in the sense of balance laws if and only if
\be
\label{eq:rh-balance}
\frac{dr_s}{dt}[u] = [F],
\ee
where $[u] = u_R - u_L$ and $[F] = F_R - F_L$. Dividing by $[u]\neq 0$ recovers \eqref{RH-condition}. Equation~\eqref{eq:rh-balance} is the starting point for the shock
ODE \eqref{eq:shock-ode} in Proposition~\ref{prop:closed}, stated below. 

\subsubsection{\texorpdfstring{Closed generalized Riemann dynamics}{Closed generalized Riemann dynamics}}

For initial data of the piecewise steady-state form, the invariant $K$
takes the constant value $K_L$ (resp.\ $K_R$) on every characteristic
to the left (resp.\ right) of the shock, for all time.

\begin{proposition}[Resolution of the generalized Riemann problem]
\label{prop:closed}
Consider initial data of the form
\be
\label{riemann-ic}
u(0,r) = \begin{cases}
u_{K_L,\varsigma_L}(r), & r < r_0, \\
u_{K_R,\varsigma_R}(r), & r > r_0,
\end{cases}
\ee
where $u_{K,\varsigma}$ is defined in \eqref{steady-state},
$\varsigma_L,\varsigma_R\in\{-1,+1\}$, and $r_0 > 2M$. The full
spatio-temporal evolution of this single generalized Riemann problem is
determined by $(K_L,\varsigma_L,K_R,\varsigma_R,r_0)$, and one has the
following alternatives.
\begin{enumerate}
\item[\emph{(i)}] \emph{Shock case.} If
$u_{K_L,\varsigma_L}(r_0) > u_{K_R,\varsigma_R}(r_0)$,
the shock position $r_s(t)$ is such that one has
\be
\label{eq:shock-ode}
\frac{dr_s}{dt} = \sigma_{RH}(r_s;\,K_L,K_R)
  := \frac{F(u_{K_R,\varsigma_R}(r_s)) - F(u_{K_L,\varsigma_L}(r_s))}
         {u_{K_R,\varsigma_R}(r_s) - u_{K_L,\varsigma_L}(r_s)},
\ee
and $u(t,r) = u_{K_L,\varsigma_L}(r)$ for $r < r_s(t)$, $u_{K_R,\varsigma_R}(r)$ for
$r > r_s(t)$.
{
\item[\emph{(ii)}] \emph{Rarefaction case.} If
$u_{K_L,\varsigma_L}(r_0) < u_{K_R,\varsigma_R}(r_0)$,
characteristics diverge and the rarefaction fan
expands between two boundary curves $r_L(t) \leq r_R(t)$.
For the fan boundaries one has
\be
\label{eq:char-fan}
\frac{dr}{dt} = g(r)\,u_{K,\varsigma}(r)
= \left(1 - \frac{2M}{r}\right)u_{K,\varsigma}(r),
\ee
with $r_L(t)$ carrying $(K_L,\varsigma_L)$ and $r_R(t)$ carrying
$(K_R,\varsigma_R)$, both initialized at $r_0$. Inside the fan
$r_L(t) < r < r_R(t)$, the solution takes the form
\be
\label{eq:rarefaction-intermediate-profile}
u(t,r) = \varsigma(t,r)\sqrt{1 - K(t,r)\,g(r)},
\ee
where $K(t,r)$, together with the sign branch, is implicitly determined
by the condition that the characteristic satisfying \eqref{eq:char-fan}
passes through the point $(t,r)$. Outside the fan,
$u(t,r) = u_{K_L,\varsigma_L}(r)$ for $r < r_L(t)$ and
$u(t,r) = u_{K_R,\varsigma_R}(r)$ for $r > r_R(t)$.}
\end{enumerate}
\end{proposition}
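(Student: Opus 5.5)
The plan is to build the solution explicitly in each case and then check that it is an admissible weak solution of \eqref{burgers-2}. The main tool is that every steady state $u_{K,\varsigma}$ from \eqref{steady-state} solves \eqref{eq:stationary-balance-ode} exactly, and hence is a classical time-independent solution of \eqref{burgers-2} wherever it is defined. Proposition~\ref{prop:K-conservation} then lets us treat the smooth regions through the invariant $K$. Uniqueness of the entropy solution of the scalar balance law, via Kruzhkov's theory with $r$-dependent flux and source, gives that the solution is determined by $(K_L,\varsigma_L,K_R,\varsigma_R,r_0)$. We only need to exhibit one admissible solution with the stated structure.

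\emph{Shock case.} I would let $r_s$ be the solution of the ODE \eqref{eq:shock-ode} with $r_s(0)=r_0$. Its right-hand side is locally Lipschitz in $r_s$ as long as $u_{K_L,\varsigma_L}(r_s)\neq u_{K_R,\varsigma_R}(r_s)$ and both profiles are defined; this holds near $r_0$ by hypothesis. Set $u=u_{K_L,\varsigma_L}$ to the left of $r_s(t)$ and $u=u_{K_R,\varsigma_R}$ to the right.

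\begin{itemize}
\item Each piece is a classical solution of \eqref{burgers-2}.
\item Testing against $\varphi\in C^\infty_c$ and integrating by parts on each side leaves only boundary terms on the curve $r=r_s(t)$. These vanish exactly when \eqref{eq:rh-balance} holds, which is \eqref{eq:shock-ode}.
\item For entropy admissibility, the flux $F=g(u^2-1)/2$ is uniformly convex in $u$ since $g>0$. Lax's condition $g u_L > \sigma_{RH} > g u_R$ follows because $\sigma_{RH}=g(r_s)(u_L+u_R)/2$ is the chord slope. This uses $u_L(r_s(t))>u_R(r_s(t))$.
\end{itemize}

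I would propagate the inequality $u_L>u_R$ from $t=0$ by noting that equality can only occur where the two steady profiles cross. That is the same point where $K_L g=K_R g$, which forces $K_L=K_R$ and the same sign branch, unless $u=0$ on both sides. So on the maximal existence interval the ordering persists.

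\emph{Rarefaction case.} The characteristics \eqref{eq:char-fan} issued from $r_0$ with $(K_L,\varsigma_L)$ and $(K_R,\varsigma_R)$ define $r_L(t)\le r_R(t)$. The ordering follows from $g u_L(r_0)<g u_R(r_0)$ and ODE comparison. Inside the fan I would parametrize by the initial slope: for each value $v$ between $u_L(r_0)$ and $u_R(r_0)$, set $K=(1-v^2)/g(r_0)$ and $\varsigma=\operatorname{sign} v$, and solve \eqref{eq:char} from $r_0$ with $\dot u=\tfrac12 g'(u^2-1)$. By Proposition~\ref{prop:K-conservation}, $K$ stays constant along this curve, giving the form \eqref{eq:rarefaction-intermediate-profile}.

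The main step, and the obstacle, is to show that these characteristics foliate the wedge $r_L(t)<r<r_R(t)$ injectively. That is what makes $K(t,r)$ and $\varsigma(t,r)$ well defined. I would argue monotonicity of the map $v\mapsto r(t;v)$ for fixed $t$. Differentiating in $v$ gives a linear variational equation whose solution has the sign of $\partial_v \dot r(0)=g(r_0)>0$ for small $t$, and which cannot vanish afterward by uniqueness for the characteristic ODE system in $(r,u)$.

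Continuity of the resulting field across $r_L$ and $r_R$ means no Rankine--Hugoniot terms appear. The fan is then a continuous, piecewise classical solution, hence a weak entropy solution. Uniqueness, as noted above, finishes the proof. The care needed near $u=0$, where $\varsigma$ switches branch, is handled by working in the variable $u$ rather than $K$ along characteristics.
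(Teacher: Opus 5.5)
Your shock-case argument is sound. You build the piecewise steady profile, verify the jump relation on $r=r_s(t)$, check Lax's inequalities through the chord slope $\sigma_{RH}=g(r_s)(u_L+u_R)/2$, and conclude by entropy uniqueness. This differs from the paper's proof, which transports $K$ along characteristics on each side and applies Cauchy--Lipschitz to the shock ODE; your version is the more complete of the two.

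The gap is in the rarefaction case, at exactly the step you call the main obstacle. With $\xi=\partial_v r$ and $\eta=\partial_v u$, the variational system is
\[
\dot\xi = g'(r)u\,\xi + g(r)\,\eta,\qquad \dot\eta = \tfrac12 g''(r)(u^2-1)\,\xi + g'(r)u\,\eta,\qquad (\xi,\eta)(0)=(0,1).
\]
Uniqueness for this linear system only excludes $(\xi,\eta)=(0,0)$. Uniqueness for the characteristic system in $(r,u)$ only excludes two characteristics meeting at the same phase-space point. Neither excludes a later zero of $\xi$ alone, that is, characteristics from $(0,r_0)$ reaching the same $r$ at the same time with different values of $u$. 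This is how fans refocus into shocks, and it does occur for convex scalar balance laws. For $\partial_t u+\partial_r(u^2/2)=-r$ the fan characteristics are $r=r_0\cos t+v\sin t$, and they all meet at $t=\pi$, where $\xi=\sin\pi=0$ but $\eta=-1$. Your argument would apply verbatim there, so it cannot be the reason.

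Nor can you fall back on one-dimensional comparison of $\dot r=\varsigma g\sqrt{1-Kg}$ branch by branch inside the fan. For $0<v<\sqrt{2M/r_0}$, which occurs for instance in transonic fans with $\varsigma_L=-1$, $\varsigma_R=+1$, one has $K=(1-v^2)/g(r_0)>1$. The characteristic then turns around at $g=1/K$ and switches branch.

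The missing ingredient is the sign of the geometric (tidal) term. Wherever $\xi>0$, the quotient $p=\eta/\xi$, which equals $\partial_r u$ in the fan, satisfies the Riccati equation
\[
\dot p = -g(r)\,p^2 + \tfrac12 g''(r)(u^2-1) = -g(r)\,p^2 + \frac{2M}{r^3}\,(1-u^2).
\]
The argument then runs as follows:
\begin{enumerate}
\item For small $t>0$ one has $\xi\approx g(r_0)t$ and $\eta\approx 1$, so $p>0$ initially.
\item Along fan characteristics $|u|<1$, since $1-u^2=Kg>0$. At a first zero of $p$ one would need $\dot p\le 0$, but the equation gives $\dot p=\frac{2M}{r^3}(1-u^2)>0$. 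Hence $p$ stays positive.
\item Consequently $\dot\xi=(g'u+gp)\,\xi\ge -\tfrac{1}{2M}\,\xi$ on $\{\xi>0\}$, using $|u|\le1$ and $g'\le 1/(2M)$. So $\xi$ has no first positive zero.
\end{enumerate}
This gives strict monotonicity of $v\mapsto r(t;v)$. It therefore yields the foliation of the wedge and the well-definedness and smoothness of $K(t,r)$ and $\varsigma(t,r)$, and the rest of your construction goes through. The paper's own proof does not settle this point either: it infers divergence of the fan only from the ordering of the characteristic speeds at $t=0$.
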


\begin{proof}
\bse
By Proposition~\ref{prop:K-conservation}, $K \equiv K_L$ on all
characteristics to the left of the shock, and $K \equiv K_R$ on the
right. The left and right traces at the shock are therefore
$u_{K_L,\varsigma_L}(r_s(t))$ and
$u_{K_R,\varsigma_R}(r_s(t))$ for all $t$, and the
Rankine--Hugoniot condition \eqref{RH-condition} reduces to the
autonomous ODE \eqref{eq:shock-ode}. Existence and uniqueness of the
shock trajectory follow from the Cauchy--Lipschitz theorem, since
$\sigma_{RH}$ is smooth in $r_s$ for $r_s > 2M$, away from the degenerate case in which the two traces coincide.

In the rarefaction case, by Proposition~\ref{prop:K-conservation},
$K$ is constant along each characteristic. If the initial traces satisfy
\(u_{K_L,\varsigma_L}(r_0) < u_{K_R,\varsigma_R}(r_0)\), then, since
$g(r_0)>0$, the corresponding characteristic speeds are
ordered in the same way. Hence characteristics emanating from $r_0$
diverge and a rarefaction fan opens. Each characteristic inside the fan
carries a fixed invariant and a fixed sign branch, which determines the
intermediate profile implicitly. Outside the fan, the same conservation
law gives the two original steady states.
\ese
\end{proof}

The shock trajectory given by the ODE
\eqref{eq:shock-ode} is precisely the quantity approximated by PITT:
the analytic term $\sigma_{RH}^{(t)}$ in the rollout \eqref{eq:rh-speed}
evaluates the right-hand side of \eqref{eq:shock-ode} at the current
predicted shock position, and the learned correction
$\Delta\sigma_{NN}^{(t)}$ accounts for residual transient effects not represented by the steady-state prior.


\subsection{Approximation of general solutions}
\label{section=2--3}

\subsubsection{\texorpdfstring{Approximation on a computational interval}{Approximation on a computational interval}}

{The previous construction is exact for a single generalized
Riemann problem. For more general initial data, the relevant point is that
piecewise steady-state profiles form a flexible local approximation class
on any computational interval separated from the horizon. 

\begin{proposition}[Approximation by piecewise steady-state profiles]
\label{lem:density}
{Let $I=[r_{\min},r_{\max}]$ with $2M<r_{\min}<r_{\max}<+\infty$.
Then one has the following density property: the set of piecewise
steady-state functions of the form}
\be
\label{eq:piecewise-steady-approx-class}
{
u(r) = \sum_{i=1}^{N} \varsigma^i \sqrt{\max(0,\, 1 - K^i g(r))}\,
       \mathbf{1}_{[r_{i-1}, r_i)}(r),
\quad K^i \geq 0,\; \varsigma^i \in \{-1,+1\},
}
\ee
{is dense in the set of $BV(I)$ profiles satisfying $|u|\leq1$, with
respect to the $L^1(I)$ norm.}
\end{proposition}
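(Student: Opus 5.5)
We reduce the density statement to the classical density of step functions in $L^1(I)$, and then replace each constant value by a nearby steady-state profile. Since $BV(I)\subset L^1(I)$ and piecewise constant functions are dense in $L^1(I)$, for a given $u\in BV(I)$ with $|u|\le 1$ and $\varepsilon>0$ we first choose a partition $r_{\min}=r_0<r_1<\dots<r_N=r_{\max}$ and constants $c_i$ with
\[
\Bigl\|u-\sum_{i} c_i\mathbf{1}_{[r_{i-1},r_i)}\Bigr\|_{L^1(I)}<\varepsilon/2 .
\]
For $BV$ data one can take $c_i$ to be cell averages, or values of $u$ at interior points. Either way $|c_i|\le 1$, because averages and point values of a function bounded by $1$ are bounded by $1$. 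Refining the partition if necessary, we may assume $\max_i (r_i-r_{i-1})\le h$, where $h$ is a mesh size to be fixed below.

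Next, on each cell we match the constant $c_i$ by a steady state at the left endpoint. We set $\varsigma^i=\operatorname{sign}(c_i)$, taking $+1$ if $c_i=0$, and
\[
K^i:=\frac{1-c_i^2}{g(r_{i-1})}\ \ge 0 ,
\]
which is well defined because $g(r_{i-1})\ge g(r_{\min})>0$. This is exactly the invariant \eqref{eq:K-def} evaluated at $(r_{i-1},c_i)$, and it gives $\varsigma^i\sqrt{\max(0,1-K^ig(r_{i-1}))}=c_i$.

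The key step is a uniform estimate of the deviation of the steady profile from $c_i$ across the cell. We have $K^i\le 1/g(r_{\min})$. Moreover $g$ is Lipschitz on $I$ with constant $L:=2M/r_{\min}^2$. Finally, $x\mapsto\sqrt{\max(0,x)}$ is $\tfrac12$-Hölder with constant $1$. Together these give, for $r\in[r_{i-1},r_i)$,
\[
\Bigl|\varsigma^i\sqrt{\max(0,1-K^ig(r))}-c_i\Bigr|\le \sqrt{K^i\,|g(r)-g(r_{i-1})|}\le \sqrt{\frac{L\,h}{g(r_{\min})}} .
\]
The main technical point is that the square root is not Lipschitz near $|u|=1$. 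This is why the $\max(0,\cdot)$ truncation appears in the statement: when $K^i>1$ the profile may reach the sonic value $|u|=1$ inside a cell. The Hölder bound handles this uniformly, with no case distinction and no lower bound on $1-c_i^2$. The constants depend only on $M$ and $r_{\min}$, and this is where the assumption $r_{\min}>2M$ is essential: near the horizon, $g(r_{\min})\to 0$ and the bound degenerates.

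Integrating over $I$, the total error from the steady-state substitution is at most $(r_{\max}-r_{\min})\sqrt{Lh/g(r_{\min})}$. Choosing $h$ small makes this less than $\varepsilon/2$, and the triangle inequality concludes. The refinement needed to reach mesh size $h$ does not increase the step-function error, since splitting cells while keeping the same constants leaves the step function unchanged. The resulting approximant also satisfies $|u|\le1$ automatically, so it lies in the admissible class \eqref{eq:piecewise-steady-approx-class}. If one also wants convergence in the $BV$-weak sense, i.e.\ bounded total variation, one can use cell averages; the total variation of the approximant is then bounded by $TV(u)$ plus $N$ times a per-cell oscillation of order $\sqrt{h}$. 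This is not required for the $L^1$ density claim.
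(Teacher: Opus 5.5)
Your argument is correct and is essentially the paper's proof. Both approximate by a step function and then, on each cell, replace the constant by the steady branch with $\varsigma^i=\operatorname{sign}(c_i)$ and $K^i=(1-c_i^2)/g(\bar r_i)$, matched at one point of the cell (you take $\bar r_i=r_{i-1}$); your $\tfrac12$-H\"older bound $\sqrt{Lh/g(r_{\min})}$ makes explicit the uniform-continuity step that the paper only asserts.

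Two side remarks in your commentary are off, though neither affects the $L^1$ claim. First, the square root degenerates where $1-K^ig(r)=0$, i.e.\ at $u=0$ (a branch with $K>1$ reaches this at $r=2MK/(K-1)$), not at the sonic value $|u|=1$. Second, your closing total-variation estimate gives $N\cdot O(\sqrt h)\sim h^{-1/2}$, which is not a uniform bound.
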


\begin{proof}
\bse
{Since $g$ is smooth and bounded away from zero on $I$, the map}
\be
\label{eq:proof-density-local-map}
{
(K,\varsigma,r)\mapsto \varsigma\sqrt{\max(0,1-Kg(r))}
}
\ee
{is continuous on compact subsets relevant to $|u|\leq1$.
Given $\varepsilon>0$, choose
a partition $r_{\min}=r_0<r_1<\cdots<r_N=r_{\max}$ fine enough that the
oscillation of both $u_0$ and $g$ on each subinterval is small, except on
a set of total $L^1$ contribution below $\varepsilon$. On
$[r_{i-1},r_i)$, choose a point $\bar r_i$ and a representative value
$\bar u_i$ of $u_0$, then set
\(\varsigma^i=\operatorname{sign}(\bar u_i)\) and
\(K^i=(1-\bar u_i^2)/g(\bar r_i)\).
The associated steady profile agrees with $\bar u_i$ at $\bar r_i$ and
remains close to it on the whole subinterval when the mesh is small. The
standard approximation of bounded-variation functions by piecewise constants therefore
gives an $L^1(I)$ error bounded by $C\varepsilon$, where $C$ depends only
on $I$, $M$, and the total variation of $u_0$.}
\ese
\end{proof}

\subsubsection{\texorpdfstring{From local Riemann problems to front tracking}{From local Riemann problems to front tracking}}

Proposition~\ref{lem:density} provides the approximation-theoretic foundation
for treating general initial data. Given any $u_0 \in BV$, we recover
the conserved invariant field $K(r) = (1-u_0^2)/g(r)$ pointwise via
Proposition~\ref{prop:K-conservation}, approximate it by a piecewise
constant function with finitely many jumps, and decompose the resulting
problem into independent generalized Riemann subproblems, each governed
by adjacent steady branches $(K^i,\varsigma^i)$ and
$(K^{i+1},\varsigma^{i+1})$. 

We adopt a front-tracking strategy: the
position of each shock is tracked explicitly via the
Rankine--Hugoniot ODE~\eqref{eq:shock-ode}, while the smooth field
between discontinuities is reconstructed from the piecewise
steady-state profiles $u_{K^i,\varsigma^i}$. Rarefaction waves are
represented by characteristic fans, while the learned residual correction
in the field reconstruction \eqref{field-reconstruction} supplies a
compact neural representation of the smooth fan profile on the numerical
grid. The exact front-tracking description is valid up to the first wave
interaction time $t_{\mathrm{coll}}$; beyond that time, the same
decomposition becomes a structured approximation used by the PITT rollout,
in the spirit of the random choice and front-tracking constructions
\cite{Dafermos2010,Glimm1965,LeFloch2002}.}


\section{A shock-aware PITT method for Schwarzschild--Burgers flows}
\label{section=3}

\subsection{Equation tokenization}
\label{section=3--1}

{ 

\subsubsection{Brief presentation}

We now describe the proposed PITT method, developed in this paper for the
relativistic Schwarzschild--Burgers equation. In particular, our aim is to
explain how the usual ingredients of a shock-capturing method are
translated into a neural architecture. In a standard finite-volume scheme,
one specifies the balance-law form of the equation, chooses a numerical
flux or a Riemann solver, and advances cell averages in time. Here, the
same physical information is used, but it is supplied to the neural model
through symbolic equation tokens and through an \emph{explicit
Rankine--Hugoniot shock prior}. {The construction uses the PITT idea of
equation tokens~\cite{Lorsung2024}, Transformer attention
\cite{Vaswani2017,Cao2021}, and neural-operator encodings
\cite{Li2021,Lu2021,Kovachki2023}.}

We focus on the scalar balance law \eqref{burgers-2}.
The proposed architecture has four main components: an \emph{equation
tokenizer}, an \emph{FNO field encoder}, a \emph{parameter encoder}, and a
stack of \emph{physics-informed update blocks}. These components are
followed by an analytic shock rollout based on the Rankine--Hugoniot
condition. {Thus, the network is not asked to discover the shock
speed from training data. In the present method, the shock motion is
constrained by the jump condition from standard hyperbolic theory, while
the neural part learns how to reconstruct the surrounding smooth or
piecewise smooth solution and how to correct unresolved geometric and
nonlinear effects.}

	{The architecture should therefore be viewed as a \textbf{hybrid between a
structured solver and a neural operator}. The closed formulas from
\autoref{section=2} provide the wave variables and the admissible
shock motion, while the learned part supplies a compact representation of
the spatial field and of residual effects that are not explicitly encoded
by the steady-state Ansatz.}

The first step is to convert the governing equation into a form that can be
processed by a Transformer. This is what we call \emph{equation
tokenization}. The idea is similar to writing the PDE as a symbolic
sentence. The unknown, the independent variables, the flux, the source
term, the physical parameters, and the Rankine--Hugoniot relation are all
represented by elementary symbols. These symbols are then mapped to integer
labels, called \emph{token IDs}, and processed by a Transformer encoder. In this
way, the neural model is not only given samples of the solution field; it
is also given an \emph{explicit description of the equation} that generated the
data.

This construction should be compared with a classical finite-volume solver
as follows. A finite-volume method uses the formula for the flux and the
source term \emph{directly} in the update rule. In a PITT method, the formula for the flux,
the source term, and the shock relation are instead encoded as a \emph{symbolic
sequence}. The Transformer converts this sequence into a vector
representation, called the physics embedding. This embedding constrains the subsequent neural update blocks. Thus, \emph{the architecture is made aware of the governing balance law} before it predicts the evolution. In particular, equation tokenization is not used here as a symbolic
parser. Its role is to expose, in a uniform learnable format, the
mathematical ingredients that distinguish one balance law, one geometry,
or one wave configuration from another. 

\begin{table}[H]
\centering
\small
\caption{Token vocabulary. $\langle\texttt{PAD}\rangle$: padding;
$\langle\texttt{SEP}\rangle$: section separator;
$\langle\texttt{NUM}\rangle$: numeric placeholder.}
\label{tab:vocab}

\renewcommand{\arraystretch}{1.08}
\begin{tabular}{@{}c@{\hspace{1.8em}}c@{\hspace{1.8em}}c@{}}

\begin{tabular}{c|c}
\toprule
Token & ID \\ \midrule
$\langle\texttt{PAD}\rangle$ & 0 \\
$\langle\texttt{SEP}\rangle$ & 1 \\
$\langle\texttt{NUM}\rangle$ & 2 \\
$\partial$ & 3 \\
$u$ & 4 \\
$t$ & 5 \\
$r$ & 6 \\
$+$ & 7 \\
$-$ & 8 \\
$*$ & 9 \\
\bottomrule
\end{tabular}

&

\begin{tabular}{c|c}
\toprule
Token & ID \\ \midrule
$/$ & 10 \\
$=$ & 11 \\
$F$ & 12 \\
$S$ & 13 \\
$M$ & 14 \\
$K$ & 15 \\
$($ & 16 \\
$)$ & 17 \\
${}^2$ & 18 \\
$0$--$9$ & 19--28 \\
\bottomrule
\end{tabular}

&

\begin{tabular}{c|c}
\toprule
Token & ID \\ \midrule
$.$ & 29 \\
$\&$ & 30 \\
$R$ & 31 \\
$H$ & 32 \\
$:$ & 33 \\
$s$ & 34 \\
$[$ & 35 \\
$]$ & 36 \\
$,$ & 37 \\
$\varsigma$ & 38 \\
\bottomrule
\end{tabular}

\end{tabular}
\end{table}
%
\subsubsection{Terminology and notation}

It is convenient to summarize the relevant vocabulary $\mathcal{V}$ in
Table~\ref{tab:vocab}. By design, it contains $|\mathcal{V}|=39$ tokens,
covering
\bei
\item special tokens denoted by
$\langle\texttt{PAD}\rangle$,
$\langle\texttt{SEP}\rangle$, and
$\langle\texttt{NUM}\rangle$;

\item differential operators, variables, arithmetic symbols, and physical
parameters;

\item digits and symbols associated with the sign branches and the
Rankine--Hugoniot condition.
\eei
The token $\langle\texttt{PAD}\rangle$ is used to fill short sequences up
to a fixed length, $\langle\texttt{SEP}\rangle$ separates logical parts of
the symbolic description, and $\langle\texttt{NUM}\rangle$ may be used as a
placeholder for numerical values. In the implementation described below,
numerical constants are also tokenized digit by digit, which gives the
network access to the actual parameter values.

Given the balance law \eqref{burgers-2} and the Rankine--Hugoniot condition
\eqref{RH-condition}, each component is serialized into a token sequence.
For example, with $K_L=0.200$, $K_R=0.400$, and $M=1.000$, one obtains
\be
\label{eq:token-sequence-example}
\begin{aligned}
\texttt{[}\langle\texttt{SEP}\rangle,\;
  &\partial, u, /, \partial, t, +, \partial, F, /, \partial, r, =, S, \&,\\
  &F, =, (, 1, -, 2, *, M, /, r, ), *, (, u, {}^2, -, 1, ), /, 2, \&,\\
  &S, =, (, 2, *, M, /, r, {}^2, ), *, (, u, {}^2, -, 1, ), \&,\\
  &M, =, 1, ., 0, 0, 0, \&,\quad
   K, =, 0, ., 2, 0, 0, \&,\quad
   \varsigma, =, +, \&,\\
  &K, =, 0, ., 4, 0, 0, \&,\quad
   \varsigma, =, -, \&,\quad
   R, H, :, s, =, [, F, ], /, [, u, ],\;
  \langle\texttt{SEP}\rangle\texttt{]}.
\end{aligned}
\ee
Here the first line encodes the abstract balance-law structure, the next
two lines encode the flux and the geometric source term, the following
entries encode the physical parameters, invariant parameters, and sign
branches, and the final part encodes the Rankine--Hugoniot shock-speed
relation. The symbol $\&$
separates equations within the same logical block.

Numeric values are tokenized digit by digit to three decimal places; for
instance, 
\be
\label{eq:digit-tokenization}
0.200\mapsto\texttt{[`0', `.', `2', `0', `0']}.
\ee
This is a simple way of keeping the token vocabulary small while still
allowing the Transformer to distinguish different parameter values. The
full token sequence is then padded or truncated to a fixed length $L$, so
that all samples in a batch have the same input size.

Each token is mapped to a learnable $d_{\mathrm{model}}$-dimensional
embedding. A learnable positional encoding of the same dimension is added
in order to preserve the order of the symbols, since the meaning of a
formula depends on the order in which its tokens appear. The resulting
sequence $\mathbf{T}\in\mathbb{N}^{B\times L}$, where $B$ denotes the
batch size, is processed by a Transformer encoder
with multi-head self-attention. Padding tokens, which have ID $0$, are
excluded from the attention computation by an additive mask. Thus, only the
meaningful part of the symbolic equation contributes to the representation.

The output of the Transformer encoder is then averaged over the sequence
length to produce a global physics embedding
\(\mathbf{e}_{\mathrm{phys}}\in
\mathbb{R}^{B\times d_{\mathrm{model}}}\).
This vector should be interpreted as a learned representation of the
governing balance law, the source term, the relevant parameters, and the
shock relation. It is passed to the subsequent field encoder and update
blocks, so that the neural evolution is conditioned by the equation itself
rather than only by the sampled initial data.}

 
\subsection{Encoding the initial field and the geometry with an FNO}
\label{section=3--2}

{ 

\subsubsection{\texorpdfstring{Input channels}{Input channels}}

The equation tokenizer described above produces a global representation of
the governing balance law. We now describe the second input to the PITT
architecture, namely the \emph{sampled initial field}. In a classical
finite-volume method, the initial data are represented by cell averages or
nodal values on a spatial grid. Here, we use the same type of sampled
information, but we lift it to a latent representation by means of a Fourier Neural Operator (FNO)~\cite{Li2021}.

The encoder receives the initial profile
$\mathbf{U}_{\mathrm{in}}\in\mathbb{R}^{B\times N_r}$, where $B$ is the
batch size and $N_r$ is the number of grid points. This array represents
the initial velocity field $u(0,\cdot)$ in \eqref{burgers-2} evaluated on a
uniform radial grid. Since the relativistic Schwarzschild--Burgers equation
depends explicitly on the background geometry, the metric coefficient
\(g(r)=1-2M/r\) is also provided to the encoder. In practice, $g(r)$ is concatenated with
the initial field as an additional channel. Thus, the encoder does not see
only the fluid state; it also receives the geometric information that
enters the flux, the source term, and the steady-state structure.

\subsubsection{\texorpdfstring{Fourier lifting and projection}{Fourier lifting and projection}}

The FNO block acts as a global spatial feature extractor. Given a latent
field $v$, it applies a Fourier transform in the radial variable, keeps a
prescribed number $\kappa$ of low-frequency modes, multiplies these modes by
learnable complex weights, and transforms the result back to physical
space. At the level of one Fourier layer, this operation can be written as
\be
\label{eq:fno-layer}
(\mathcal{K}v)(r)
=
\mathcal{F}^{-1}
\Bigl(
R_\kappa \cdot \mathcal{F}v
\Bigr)(r),
\ee
where $\mathcal{F}$ denotes the discrete Fourier transform in the spatial
variable and \(R_\kappa\in
\mathbb{C}^{\kappa\times C_{\mathrm{in}}\times C_{\mathrm{out}}}\)
is a learnable complex weight tensor. The parameter $\kappa$ controls the
number of retained Fourier modes, while $C_{\mathrm{in}}$ and
$C_{\mathrm{out}}$ denote the input and output channel dimensions.

The output of the FNO is then projected by a linear map
$\mathbf{W}_{\mathrm{proj}}$ to the common latent dimension used by the
Transformer-based part of the architecture. This gives the initial latent
state
\(\mathbf{V}^{(0)}\in
\mathbb{R}^{B\times N_r\times d_{\mathrm{model}}}\).
The index $r$ is still spatial: for each grid point, the model stores a
$d_{\mathrm{model}}$-dimensional feature vector. This latent field is the
neural analogue of the initial discrete state in a numerical scheme, but
with additional channels \emph{encoding nonlocal spatial information and
geometric dependence.}

\subsubsection{Scope of this step}

The use of an FNO should not be interpreted as a replacement for the
shock-capturing mechanism. Fourier representations alone \emph{may introduce
oscillations or smoothing} near discontinuities. In the present method, the
FNO is used to encode the global shape of the initial field and its
interaction with the Schwarzschild geometry, while the discontinuity motion
is governed separately by the Rankine--Hugoniot prior. This division of
roles is essential: the FNO provides a resolution-flexible field
representation, whereas the shock prior supplies the local jump information
needed to propagate discontinuities accurately.

}


\subsection{Parameter encoder and global conditioning}
\label{section=3--3}

\subsubsection{\texorpdfstring{Physical parameters}{Physical parameters}}

{In the shock-tracking regime, the additional continuous
parameters supplied to the architecture are the black-hole mass $M$ and
the initial shock position $r_s^{(0)}$. The invariant parameters
$(K_L,\varsigma_L,K_R,\varsigma_R)$ are already represented in the
equation-token stream, because they determine the left and right
steady-state branches. The pair $(M,r_s^{(0)})$ is encoded by a two-layer
	MLP with GELU activation:}
	\be
	\label{eq:parameter-encoder}
	\mathbf{e}_{\mathrm{param}}
	  = \mathrm{MLP}_{\mathrm{param}}\!\bigl([M,\,r_s^{(0)}]\bigr)
	  \in\mathbb{R}^{B\times d_{\mathrm{model}}}.
	\ee

\subsubsection{\texorpdfstring{Global and temporal conditioning}{Global and temporal conditioning}}

The base conditioning vector combines the physics and parameter
embeddings, so that
\(\mathbf{c}^{(0)}=\mathbf{e}_{\mathrm{phys}}+\mathbf{e}_{\mathrm{param}}\),
where $\mathbf{e}_{\mathrm{phys}}$ is the global physics embedding from
\autoref{section=3--1}. At each rollout step $t$, a learnable time
embedding $\mathbf{e}_t\in\mathbb{R}^{d_{\mathrm{model}}}$ is added,
giving \(\mathbf{c}^{(t)}=\mathbf{c}^{(0)}+\mathbf{e}_t\). This ensures that both the physical structure of the PDE and the current
temporal context jointly modulate every layer of the latent dynamics. In
the formulas below, $\mathbf{c}^{(t)}$ is broadcast over the spatial grid
whenever it is added to a tensor in $\mathbb{R}^{B\times N_r\times
d_{\mathrm{model}}}$.


\subsection{Physics-informed update blocks}
\label{section=3--4}

{The latent field state
$\mathbf{V}\in\mathbb{R}^{B\times N_r\times d_{\mathrm{model}}}$ is
iteratively refined by $N_{\mathrm{blocks}}$ stacked update blocks. Each
block performs cross-attention between the spatial latent state and a
compressed summary of the symbolic token sequence, followed by a residual
MLP update.}

\subsubsection{\texorpdfstring{Compressed token summaries}{Compressed token summaries}}

{The token sequence}
$\mathbf{T}_{\mathrm{seq}}\in\mathbb{R}^{B\times L\times d_{\mathrm{model}}}$
is compressed into $n_s$ summary tokens via chunk mean-pooling:
\be
\label{eq:token-chunk-pooling}
\widetilde{\mathbf{T}}_i
  = \frac{1}{L/n_s}\sum_{j=(i-1)L/n_s}^{iL/n_s - 1}
    \mathbf{T}_{\mathrm{seq},j},
\quad \widetilde{\mathbf{T}}\in\mathbb{R}^{B\times n_s\times d_{\mathrm{model}}},
\ee
{preserving the structural segments of the token sequence: PDE
body, flux $F$, source $S$, parameters, and Rankine--Hugoniot relation.
This reduces the cross-attention cost from $\mathcal{O}(N_r L)$ to
$\mathcal{O}(n_s N_r)$ while keeping the equation-level information
available to every spatial point.}

\subsubsection{\texorpdfstring{Cross-attention update}{Cross-attention update}}

Queries are projected from the spatial latent state, while keys
and values are projected from the compressed token summary:
\be
\label{eq:attention-projections}
Q = \mathbf{W}_Q\mathbf{V},\qquad
K_{\mathrm{tok}} = \mathbf{W}_K\widetilde{\mathbf{T}},\qquad
V_{\mathrm{tok}} = \mathbf{W}_V\widetilde{\mathbf{T}}.
\ee
With these projections, we have 
$Q\in\mathbb{R}^{B\times N_r\times d_{\mathrm{model}}}$ and
$K_{\mathrm{tok}},V_{\mathrm{tok}}\in
\mathbb{R}^{B\times n_s\times d_{\mathrm{model}}}$. The attention map is
\be
\label{eq:attention-map}
A = \mathrm{softmax}\!\left(
      \frac{QK_{\mathrm{tok}}^{\top}}{\sqrt{d_{\mathrm{model}}}}
    \right)
    \in \mathbb{R}^{B\times N_r\times n_s},
\ee
and the latent field is updated by
\be
\label{eq:latent-update}
\mathbf{O}=A V_{\mathrm{tok}},
\qquad
\mathbf{V}
  \leftarrow
  \mathbf{V}
  + \alpha\;\mathrm{MLP}_{\mathrm{update}}\!\bigl(
    \mathbf{O} + \mathbf{c}^{(t)}\bigr),
\ee
{where $\mathbf{c}^{(t)}$ is broadcast over the radial grid. The
factor $\alpha$ is a fixed step size that keeps the learned correction
small relative to the analytic shock and steady-state priors.}


\subsection{RH-guided shock evolution and field reconstruction}
\label{section=3--5}

\subsubsection{\texorpdfstring{Shock trajectory}{Shock trajectory}}

{In the shock regime, the discontinuity is not reconstructed from the grid values alone. Instead, its position is an explicit variable. At
each rollout step $t$, the shock position is advanced by}
\be
\label{eq:shock-position-update}
r_s^{(t+1)} = r_s^{(t)}
  + \bigl(\sigma_{RH}^{(t)} + \Delta\sigma_{NN}^{(t)}\bigr)\,\Delta t,
\ee
{where $\Delta t$ is the rollout time step. The dominant term}
$\sigma_{RH}^{(t)}$ evaluates the right-hand side of \eqref{eq:shock-ode}
at the current predicted shock position:
\be
\label{eq:rh-speed}
\sigma_{RH}^{(t)}
  = \frac{F_R(r_s^{(t)}) - F_L(r_s^{(t)})}
         {u_R(r_s^{(t)}) - u_L(r_s^{(t)})},
\ee
{with
$u_L(r)=u_{K_L,\varsigma_L}(r)$ and
$u_R(r)=u_{K_R,\varsigma_R}(r)$. The neural correction}
\be
\label{eq:neural-shock-correction}
\Delta\sigma_{NN}^{(t)}
  = \epsilon\;\tanh\!\bigl(\mathbf{W}_{\mathrm{shock}}\,\bar{\mathbf{V}}^{(t)}\bigr)
    \cdot\gamma(K_L,K_R)
\ee
is gated by
\be
\label{eq:shock-gate}
\gamma(K_L,K_R)
  = \sigma_{\mathrm{sig}}\!\bigl(\lambda\,(|K_L-K_R|-\delta)\bigr),
\ee
where $\bar{\mathbf{V}}^{(t)} = \frac{1}{N_r}\sum_i\mathbf{V}_i^{(t)}$
is the spatial mean of the latent state, $\epsilon$ controls the correction
magnitude, and $(\lambda,\delta)$ are gate sharpness and threshold
hyperparameters. The gate $\gamma$ suppresses the neural correction when
$|K_L-K_R|$ is small, i.e., near steady-state configurations where
$\sigma_{RH}\approx 0$. {In exact generalized Riemann data, this
correction is expected to remain small; its role is to absorb
discretization error, finite-resolution effects, and residual mismatch in
the learned latent dynamics. The shock position is clamped to
$r_s\in(2M+\varepsilon,\,r_{\max}-\varepsilon)$ at every step.}

\subsubsection{\texorpdfstring{Field reconstruction and rarefaction mode}{Field reconstruction and rarefaction mode}}

{In the shock regime, given $r_s^{(t)}$, the full spatial field is
reconstructed as}
\be
\label{field-reconstruction}
u^{(t)}(r)
  = u_L(r)\,s(r) + u_R(r)\,\bigl(1-s(r)\bigr)
  + \mu\;\tanh\!\bigl(\mathbf{W}_{\mathrm{field}}\,\mathbf{V}^{(t)}\bigr),
\ee
where $s(r) = \sigma_{\mathrm{sig}}\!\bigl((r_s^{(t)}-r)/w\bigr)$ is a
smooth step function with transition width $w$. The first two terms
constitute the \emph{physics prior}: a sharp but differentiable
approximation of the exact piecewise steady-state solution parameterized
by $(K_L,\varsigma_L,K_R,\varsigma_R)$. {The final term is a learned
residual correction, bounded in magnitude by $\mu$, that accounts for
deviations from the steady-state profiles during transient dynamics.}
{In the rarefaction regime, no shock position is evolved. The two
fan boundaries are instead advanced by the characteristic ODE
\eqref{eq:char-fan}, and the decoder uses the same latent field
$\mathbf{V}^{(t)}$ to reconstruct the smooth transition inside the fan.
Thus, the architecture separates the treatment of discontinuity motion
from the reconstruction of smooth or rarefaction regions.}


\subsection{Architecture summary}
\label{section=3--6}

\subsubsection{\texorpdfstring{Forward pass}{Forward pass}}

The complete PITT forward pass proceeds as follows:
\begin{enumerate}
\item \textbf{Encode.} FNO encodes $(u_0,\,g(r))\to\mathbf{V}^{(0)}$;
  the Transformer encodes the token sequence $\to\mathbf{e}_{\mathrm{phys}}$;
  the MLP encodes $(M,\,r_s^{(0)})\to\mathbf{e}_{\mathrm{param}}$.

\item \textbf{Condition.} Form
  $\mathbf{c}^{(t)} = \mathbf{e}_{\mathrm{phys}} + \mathbf{e}_{\mathrm{param}}
  + \mathbf{e}_t$ for each rollout step $t$.

\item \textbf{Rollout.} For $t = 0,\ldots,T-1$:
  \begin{enumerate}
  \item Apply $N_{\mathrm{blocks}}$ update blocks to refine $\mathbf{V}^{(t)}$.
  \item If the initial data are
        of shock type, that is,
        \(u_{K_L,\varsigma_L}(r_0) > u_{K_R,\varsigma_R}(r_0)\):
        compute $\sigma_{RH}^{(t)}$
        from $(K_L,\varsigma_L,K_R,\varsigma_R,r_s^{(t)})$
        via \eqref{eq:rh-speed}, compute
        $\Delta\sigma_{NN}^{(t)}$ from $\bar{\mathbf{V}}^{(t)}$, and
        advance $r_s^{(t+1)}$.
  \item If the initial data are
        of rarefaction type, that is,
        \(u_{K_L,\varsigma_L}(r_0) < u_{K_R,\varsigma_R}(r_0)\):
        no shock position is tracked.
        The fan boundaries $r_L^{(t)}$ and $r_R^{(t)}$ are advanced by
        integrating the characteristic ODE \eqref{eq:char-fan} with
        $u = u_{K_L,\varsigma_L}$ and
        $u = u_{K_R,\varsigma_R}$, respectively.
  \item Reconstruct the full field $u^{(t)}(r)$ via
        \eqref{field-reconstruction}, where the bounded field residual
        accounts for the smooth fan profile in the rarefaction case and
        transient deviations from the steady-state prior in the shock case.
  \end{enumerate}

\item \textbf{Output.} Return $\{u^{(t)}\}_{t=0}^{T-1}$ and
  $\{r_s^{(t)}\}_{t=0}^{T-1}$ (shock case) or
  $\{r_L^{(t)}, r_R^{(t)}\}_{t=0}^{T-1}$ (rarefaction case).
\end{enumerate}

\subsubsection{\texorpdfstring{Regime-dependent outputs}{Regime-dependent outputs}}

{The output variables therefore depend on the wave regime. For a
shock sample, the model predicts both a field and a shock trajectory, and
the Rankine--Hugoniot prior is directly active. For a rarefaction sample,
the explicit geometric evolution is carried by the two fan boundaries,
while the field decoder reconstructs the smooth fan profile. This
distinction is important for the numerical tests below: the same learned
latent dynamics is used in both regimes, but the analytical wave prior is
adapted to the corresponding entropy solution.}


\section{Numerical methodology and implementation}
\label{section=4}  

\subsection{Objective of this section}
\label{section=4--1}

{
\subsubsection{Scope of the numerical protocol}

In this section we test, for the Schwarzschild--Burgers model introduced
in \autoref{section=2}, the PITT architecture developed in
\autoref{section=3}. We first specify the data generation procedure
and the finite-volume reference solver
(Sections~\ref{section=4--2}--\ref{section=4--3}), and then record the
implementation choices and evaluation criteria
(Sections~\ref{section=4--4}--\ref{section=4--5}).}

{
\subsubsection{Connection with the analytical wave structure}

The numerical tests are organized around the three elementary regimes
identified in Proposition~\ref{prop:closed}: moving shocks, steady shocks,
and rarefaction waves. This organization allows us to
separate the effect of the Rankine--Hugoniot prior, the effect of the
equation tokenizer, and the residual neural correction. The ablation study
in \autoref{section=5--2} isolates these contributions, while
\autoref{section=5--3} checks that the same architecture is
consistent with the flat-spacetime limit $M\to0$.}


\subsection{Dataset and data generation}
\label{section=4--2}

\subsubsection{Analytic data generation}
We use the exact Riemann solver to generate training and validation data.
For each sample, the black hole mass $M$, steady-state parameters
$(K_L, K_R)$, flow direction signs
$(\varsigma_L,\varsigma_R)\in\{-1,+1\}^2$, initial discontinuity position
$r_0$, and solution type are drawn randomly. The initial field is the
piecewise steady-state profile \eqref{eq:ic}, where $\varsigma_{L,R}$
encodes the flow direction on each side:
\be
\label{eq:ic}
u(0, r) = \begin{cases}
\varsigma_L\,\sqrt{\max(0,\,1 - K_L\,g(r))}, & 2M < r < r_0, \\
\varsigma_R\,\sqrt{\max(0,\,1 - K_R\,g(r))}, & r > r_0.
\end{cases}
\ee
The regimes are selected by comparing the two traces
$u_L^0=u_{K_L,\varsigma_L}(r_0)$ and
$u_R^0=u_{K_R,\varsigma_R}(r_0)$. Moving shocks correspond to
$u_L^0>u_R^0$, steady shocks to $K_L=K_R$ and
$\varsigma_L=-\varsigma_R$, and rarefaction waves to $u_L^0<u_R^0$.
Full trajectories are generated semi-analytically: the steady branches are
evaluated from the closed formulas, while a fourth-order Runge--Kutta
method with step $\Delta t$ is used for \eqref{eq:shock-ode} in shock cases
and for the rarefaction fan boundaries in rarefaction cases. The spatial domain uses a uniform grid of $N_r$ points
over $[r_{\min},r_{\max}]$ with $r_{\min}=2M+\varepsilon$. Each trajectory
covers $t\in[0,T]$ with time step $\Delta t$, yielding $T/\Delta t$ target
frames. Solution types are sampled at fixed proportions: $40\%$ moving
shocks, $30\%$ steady shocks, and $30\%$ rarefaction waves.

{
\subsubsection{Reproducibility parameters}

Unless otherwise stated, the random seed is fixed to $2026$, the horizon
offset is $\varepsilon=10^{-3}$, and the computational domain is
\([2M+\varepsilon,15]\). We use $N_r=500$ grid points and $100$ output
times with $\Delta t=10^{-2}$. For moving shocks and rarefactions, $M$ and
$(K_L,K_R)$ are sampled from the ranges in Table~\ref{tab:dataset},
$r_0$ is sampled uniformly from
\([2M+0.5,\,12]\), and the four sign choices are first sampled uniformly
and then rejected unless they produce the desired regime
$u_L^0>u_R^0$ or $u_L^0<u_R^0$. For steady shocks we set
$K_L=K_R=:K$, take $\varsigma_L=+1$ and $\varsigma_R=-1$, and sample
$K$ and $r_0$ from the same ranges. Samples for which the square root in
\eqref{eq:ic} is truncated on more than $5\%$ of the grid are rejected,
so that the training set is not dominated by near-vacuum clipped states.}

\subsubsection{Code and data generation}

The data are generated directly from the parameter ranges, rejection rules,
and ODE solvers described in this section. For reproducibility, the
implementation used for the figures records the random seed, the accepted
parameter tuples, the time grid, the finite-volume CFL history, and the
trained model parameters. Since the sampling includes rejection
steps, the accepted parameter tuples are part of the reproducibility data; a
random seed alone is not a complete description if implementation details
are changed. The scripts producing the training set, test set,
tables, and figures will be archived with the submitted version of the
article, so that the numerical results can be regenerated from the
information specified here.

\subsubsection{Dataset splits}
The dataset is divided into three splits summarized in
Table~\ref{tab:dataset}. Training and validation sets use analytic
generation. {The test set is generated with fresh random
parameters. For every test trajectory we compute the analytic reference
solution and, on the same grid and time interval, the finite-volume
solution described below. All reported MAE values are measured against the
analytic reference; the finite-volume computation is reported as the
standard numerical baseline.} The validation and test sets cover parameter
ranges partially out-of-distribution relative to training, assessing
generalization to unseen physical configurations.

\begin{table}[ht]
\centering
\caption{Dataset splits. Train and validation use analytic generation.
For the test set, both the analytic reference and the numerical baseline
are computed on the same trajectories.}
\begin{tabular}{lcccc}
\hline
Split & Generation & Trajectories & $M$ range & $(K_L,K_R)$ range \\ \hline
Train      & Analytic  & 300 & $[0.8,\,1.2]$ & $[0.1,\,0.8]$ \\
Validation & Analytic  &  60 & $[1.2,\,1.5]$ & $[0.4,\,0.6]$ \\
Test       & Analytic + numerical &  30 & $[0.8,\,1.5]$ & $[0.1,\,0.8]$ \\ \hline
\end{tabular}
\label{tab:dataset}
\end{table}


\subsection{Baseline}
\label{section=4--3}

As a reference solver we use a finite-volume scheme combining third-order
strong-stability-preserving Runge--Kutta (SSP-RK3) time integration,
MUSCL reconstruction with the van Leer limiter~\cite{vanLeer1979}, and an
HLL numerical flux~\cite{Harten1983}.
The scheme solves \eqref{burgers-2}
on a uniform spatial grid with CFL-controlled adaptive time stepping and
outflow boundary conditions at the right boundary. This choice is
consistent with the geometry-preserving finite-volume strategy developed
for balance laws with sources
\cite{CeylanLeFlochOkutmustur2018,LeFlochMakhlof2014,DongLeFloch2019,
GiesselmannLeFloch2020}; we verify below that the resulting scheme is in
fact well-balanced to machine precision outside the black hole horizon.

{
\subsubsection{Finite-volume discretization}

The baseline is a standard shock-capturing finite-volume method. Its role
is to provide a conservative reference computation on the same spatial
domain, with numerical viscosity localized near discontinuities and with
stable time integration under the CFL restriction. In all comparisons we
use the same spatial grid as for PITT, take CFL number $0.45$, reconstruct
the source term at cell centers, and impose no boundary condition at the
horizon side since the characteristic speed vanishes there.}

\subsubsection{Role in the comparisons}

The finite-volume baseline used above already achieves well-balanced
behavior to machine precision at interior grid points and at the right
(far-field) boundary, since the MUSCL reconstruction and the source-term
discretization share the same geometric coefficient $g$ at each interface.
Near the horizon boundary, this property is conditional on the boundary
treatment, reflecting the coordinate singularity of the Schwarzschild
metric at $r=2M$ rather than a limitation of the discretization. A sharper
numerical comparison would still benefit from schemes explicitly designed
and validated for Schwarzschild--Burgers steady states across the full
domain, including near the horizon
~\cite{Bouchut2004,lefloch2021classwellbalancedalgorithmsrelativistic}, as
well as the geometry-preserving and convergence-oriented methods developed
for curved backgrounds more broadly
~\cite{CeylanLeFlochOkutmustur2018,LeFlochMakhlof2014,DongLeFloch2019,
GiesselmannLeFloch2020}.


\subsection{Implementation details}
\label{section=4--4}

\subsubsection{Architecture hyperparameters}
{The Token Transformer uses $N_{\mathrm{layers}}=2$ encoder layers, $8$
attention heads, and feedforward dimension $512$, with
$d_{\mathrm{model}}=128$. The FNO encoder retains $\kappa=32$ Fourier
modes with hidden width $64$. The number of update blocks is
$N_{\mathrm{blocks}}=4$. The physics correction magnitude is
$\epsilon=0.002$, the gate parameters are $\lambda=500$ and $\delta=0.015$,
and the field residual bound is $\mu=0.05$. The smooth shock transition
width follows a scheduled annealing: $w=0.01$ for the first $30\%$ of
training epochs, $w=0.005$ for epochs $30\%$--$70\%$, and $w=0.003$
thereafter, progressively sharpening the reconstructed shock front. The
spatial grid has $N_r=500$ points over $r\in[r_{\min},\,15.0]$, and each
trajectory spans $T=100$ output steps with $\Delta t=0.01$.
All linear layers use Xavier initialization, biases are initialized to
zero, and the activation function in the MLP blocks is GELU. The training
loss is averaged first over the grid and then over the rollout times, so
that changing $N_r$ or $T$ does not change the relative weights in
\eqref{eq:loss}.}

\subsubsection{Loss function}
The total training objective combines three terms:
\be
\label{eq:loss}
\mathcal{L}
  = \mathcal{L}_{L^1}
  + \lambda_{\mathrm{ss}}\,\mathcal{L}_{\mathrm{steady}}
  + \lambda_{\mathrm{RH}}\,\mathcal{L}_{\mathrm{RH}},
\ee
with $\lambda_{\mathrm{ss}}=0.01$ and $\lambda_{\mathrm{RH}}=0.001$.
The primary term is the mean absolute error over all rollout steps:
\be
\label{eq:loss-L1}
\mathcal{L}_{L^1}
  = \frac{1}{T}\sum_{t=1}^{T}
    \bigl\|u^{(t)}_{\mathrm{pred}} - u^{(t)}_{\mathrm{target}}\bigr\|_1.
\ee
{The steady-family regularizer penalizes deviations of the
predicted field from the signed steady-state family
$\{u_{K,\varsigma}\}$ by measuring the spatial variance of the inferred
invariant $K(r)=(1-u^2)/g(r)$ at the final rollout step:}
\be
\label{eq:loss-steady}
\mathcal{L}_{\mathrm{steady}}
  = \mathrm{Var}_r\!\left[\frac{1-(u^{(T)})^2}{g(r)}\right].
\ee
The RH regularizer penalizes discrepancies between the predicted shock
velocity and the analytically computed RH speed:
\be
\label{eq:loss-RH}
\mathcal{L}_{\mathrm{RH}}
  = \frac{1}{T-1}\sum_{t=1}^{T-1}
    \left(
      \frac{r_s^{(t+1)} - r_s^{(t)}}{\Delta t} - \sigma_{RH}^{(t)}
    \right)^{\!2}.
\ee
{Both $\mathcal{L}_{\mathrm{steady}}$ and
$\mathcal{L}_{\mathrm{RH}}$ are applied only to samples whose reference
solution contains a shock. They are suppressed inside rarefaction fans, for
which the correct solution is not a two-branch steady profile separated by
a single interface.}

\subsubsection{Optimization}
The model is trained with AdamW~\cite{Loshchilov2019} at learning rate
$2\times10^{-4}$ and weight decay $0$. A ReduceLROnPlateau scheduler
halves the learning rate when the validation loss fails to improve for
$5$ consecutive epochs. Gradients are clipped to a maximum $\ell^2$ norm
of $0.1$. Training runs for at most $150$ epochs with early stopping
(patience $30$). On $300$ analytic training trajectories with batch size
$4$, the model converges after approximately $143$ epochs, reaching a best
validation loss of $6.20\times10^{-4}$.


\subsection{Evaluation metrics}
\label{section=4--5}

\subsubsection{Field error}

We evaluate the model first by the \emph{field MAE}, the mean
absolute error between the predicted and reference fields, averaged over
time steps and spatial points:
\be
\label{eq:field-mae}
\mathcal{E}_{\mathrm{field}}
  = \frac{1}{T\,N_r}\sum_{t=1}^{T}
    \bigl\|\hat{u}^{(t)} - u^{(t)}\bigr\|_1.
\ee
\subsubsection{Interface errors}

The second diagnostic measures the geometric error in the predicted wave
location. For moving shocks, the \emph{shock position error} is the mean
absolute error in the predicted shock trajectory:
\be
\label{eq:shock-position-error}
\mathcal{E}_{\mathrm{shock}}
  = \frac{1}{T}\sum_{t=1}^{T}
    \bigl|\hat{r}_s^{(t)} - r_s^{(t)}\bigr|,
\ee
{where $r_s^{(t)}$ is obtained from the analytic solution, or
detected as the point of maximum spatial gradient for numerical test cases.
For rarefaction samples, an analogous fan-boundary error can be monitored
by comparing the predicted left and right fan edges with the characteristic
boundaries of the exact Riemann solution; unless otherwise stated, the
tables below report only the field MAE and the shock-position MAE.}

\subsubsection{Structural diagnostics}

In addition to the errors reported in the tables, we monitor three
structure-preservation diagnostics. They measure the violation of the
subluminal bound, the drift of the steady-state invariant, and the defect
in the conservative density \(u/g(r)^2\). For a predicted trajectory
\(\hat u\), we use
\be
\label{eq:structural-diagnostics}
\begin{aligned}
\mathcal{D}_{\mathrm{sub}}
  &:= \max_{t,i}\bigl(\,|\hat u_i^{(t)}|-1\,\bigr)_+,
  \qquad 
  &&
\mathcal{D}_{K}
  &:= \frac{1}{T}\sum_{t=1}^{T}
      \mathrm{Var}_i\!\left[
      \frac{1-(\hat u_i^{(t)})^2}{g(r_i)}
      \right],\\
\mathcal{D}_{m}
  &:= \frac{1}{T}\sum_{t=1}^{T}
      \left|
      \sum_i \frac{\hat u_i^{(t)}-\hat u_i^{(0)}}{g(r_i)^2}\,\Delta r
      \right|.
\end{aligned}
\ee
The first quantity must remain close to zero for physically admissible
velocities. The second is evaluated on regions expected to remain on a
single steady-state branch: for shock samples, we exclude a
$\pm5$-grid-point band around the predicted interface, consistent with
the merging criterion used in \autoref{section=6--1}, since a single
grid point crossing $u\approx0$ inside the transition band causes
 $K\approx1/g(r)$ to become large and would otherwise dominate the variance
without physical significance. The third is intended for finite windows
where boundary fluxes are negligible or prescribed.

We find $\mathcal{D}_{\mathrm{sub}}=0$ on all test trajectories: the
predicted velocity never exceeds the speed of light, so the subluminal
constraint of Corollary~\ref{cor:bounded} is enforced exactly. After
excluding the transition band, $\mathcal{D}_K = 2.20\times10^{-5}\pm
1.50\times10^{-5}$ for steady shocks and $5.75\times10^{-5}\pm
7.17\times10^{-5}$ for moving shocks, comparable in magnitude across both
regimes; this confirms that PITT maintains the steady-state invariant with
small drift away from the shock transition. This behavior is
architectural rather than exact: the physics prior $u_L(r), u_R(r)$ in
\eqref{field-reconstruction} lies on the steady-state manifold by
construction, and the residual correction --- the only term without an
analytical constraint --- is only softly penalized by
$\mathcal{L}_{\mathrm{steady}}$ during training; the small measured
values of $\mathcal{D}_K$ indicate that this soft constraint is effective
in practice. $\mathcal{D}_K$ is not applicable to rarefaction samples,
since the solution does not remain on a single steady branch inside the
fan.

The unmodified $\mathcal{D}_m$ is dominated by the same type of artifact
as the unmodified $\mathcal{D}_K$: the $1/g(r)^2$ weight in the
conservative density diverges as $r\to2M$, so a handful of grid points
closest to the horizon --- where $g(r)$ can fall below $10^{-3}$ --- can
amplify small prediction residuals by several orders of magnitude and
dominate the mass integral. Excluding a sufficiently wide band near
$r_{\min}$, analogous to the transition-band exclusion used for
$\mathcal{D}_K$, reduces $\mathcal{D}_m$ to $O(10^{-4})$ for all three
solution types, comparable in magnitude to $\mathcal{D}_K$. A
$20$-grid-point band ($4\%$ of the domain) suffices for moving shocks and
rarefactions; steady shocks, whose interfaces in our test set lie closer
to the horizon, require an $80$-grid-point band ($16\%$ of the domain) to
clear the shock transition itself. In every case the reduction is a
sharp, order-of-magnitude drop as the exclusion band widens, rather than
convergence to a nonzero plateau, which indicates a geometric
amplification artifact rather than a genuine violation of mass
conservation. These diagnostics do not replace the field error, but they
separate small pointwise errors from possible violations of the
geometry-preserving structure.


\section{Numerical validation}
\label{section=5}

\subsection{Comparison with the numerical baseline}
\label{section=5--1}

Table~\ref{tab:results} reports $\mathcal{E}_{\mathrm{field}}$ on the test
set, separated by solution type. Figure~\ref{fig:trajectories} shows
representative predicted trajectories for each regime, and
Figure~\ref{fig:benchmark} summarizes the benchmark comparison.
{The emphasis on wave-front location, entropy-compatible
propagation, and conservative or Riemann-informed structure is shared by
recent learning approaches for hyperbolic problems, including PINN-based
methods~\cite{Raissi2019,Mao2020,Liu2023,FerrerSanchez2024},
weak or entropy formulations~\cite{DeRyck2024,Oubarka2026},
Godunov- and Riemann-informed networks~\cite{Cassia2025,Patsatzis2024,
Peyvan2024,Wang2025}, neural flux or neural-operator methods
\cite{Kim2025,Liu2024,Liu2025,Lorin2024}, and time-continuous fluid
reconstruction~\cite{Pronk2025}.}

\begin{table}[ht]
\centering
\caption{Test set field MAE by solution type (mean $\pm$ std over $30$
test trajectories, lower is better).}
\begin{tabular}{lcccc}
\hline
Method & Moving & Steady & Rarefaction & Overall \\ \hline
PITT (full)
  & $1.11\text{e-}4$ & $1.97\text{e-}3$ & $9.14\text{e-}5$ & $7.25\text{e-}4$ \\
  & ${\scriptstyle\pm4.8\text{e-}5}$ & ${\scriptstyle\pm1.9\text{e-}4}$
  & ${\scriptstyle\pm3.5\text{e-}5}$ & ${\scriptstyle\pm8.9\text{e-}4}$ \\
Numerical (RK3+HLL)
  & $2.80\text{e-}3$ & $2.02\text{e-}3$ & $1.41\text{e-}3$ & $2.08\text{e-}3$ \\
  & ${\scriptstyle\pm7.9\text{e-}4}$ & ${\scriptstyle\pm6.2\text{e-}4}$
  & ${\scriptstyle\pm4.3\text{e-}4}$ & ${\scriptstyle\pm8.5\text{e-}4}$ \\
\hline
\end{tabular}
\label{tab:results}
\end{table}

\subsubsection{Overall accuracy}
{PITT gives an overall MAE of $7.25\times10^{-4}$, while the numerical
baseline gives $2.08\times10^{-3}$. The gain is not uniform across
solution types, and its structure reflects the role of the RH prior
established in Proposition~\ref{prop:closed}.}

\subsubsection{Moving shocks}
The most pronounced improvement occurs for moving shocks, where PITT
reduces the MAE by a factor of $25$ ($1.11\times10^{-4}$ vs
$2.80\times10^{-3}$). By evaluating the exact RH speed $\sigma_{RH}^{(t)}$
analytically at each rollout step via \eqref{eq:shock-ode}, PITT tracks
the shock trajectory with high precision, while the numerical scheme
accumulates positional error due to numerical diffusion across the
discontinuity.

\subsubsection{Steady shocks}
For steady shocks ($\sigma_{RH}=0$), the two methods perform comparably
($1.97\times10^{-3}$ vs $2.02\times10^{-3}$). Since the shock does not
move, the numerical solver does not need to track a trajectory, and
numerical diffusion near the stationary discontinuity affects both methods
similarly. The gate $\gamma$ suppresses the neural correction in this
regime, so PITT's output reduces to the physics prior evaluated at the
fixed shock position.

\subsubsection{Rarefaction waves}
{PITT gives a smaller error than the numerical solver on rarefaction waves
($9.14\times10^{-5}$ vs $1.41\times10^{-3}$), despite the piecewise
steady-state prior being an approximation in this case. The learned
residual correction accounts for the smooth fan structure, compensating for
the mismatch between the sharp-transition prior and the continuous
rarefaction profile.}

\begin{figure}[H]
\centering
\includegraphics[width=0.32\textwidth]{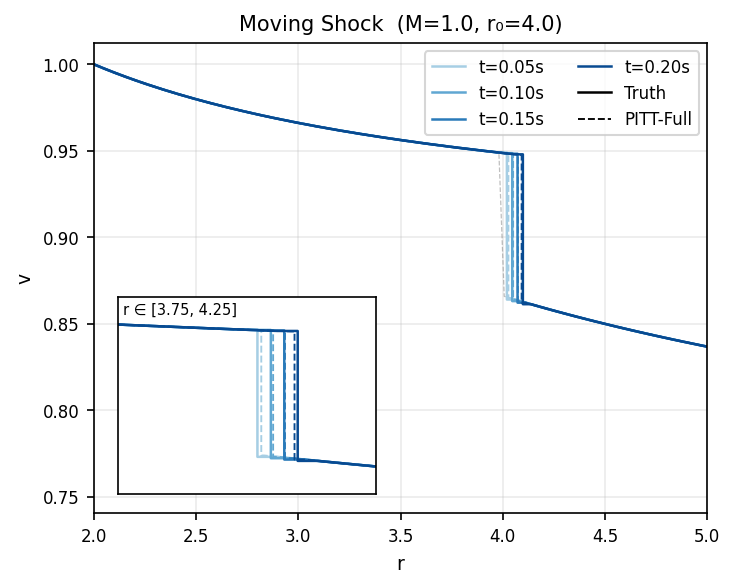}
\includegraphics[width=0.32\textwidth]{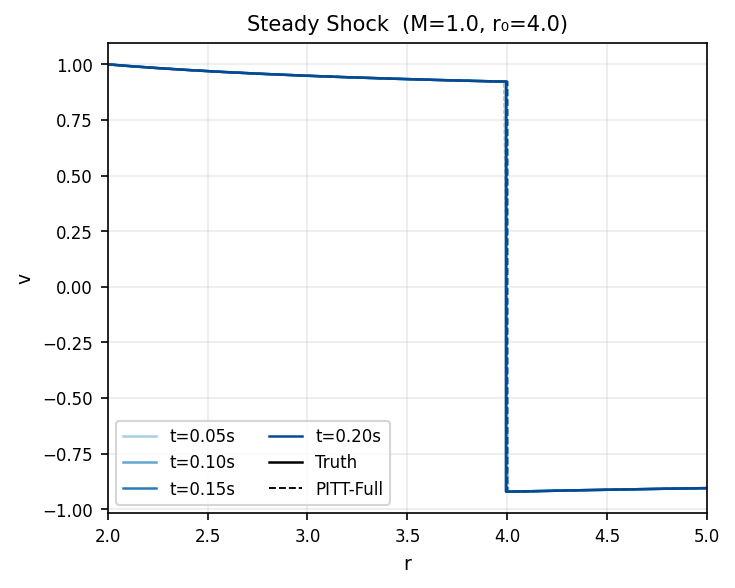}
\includegraphics[width=0.32\textwidth]{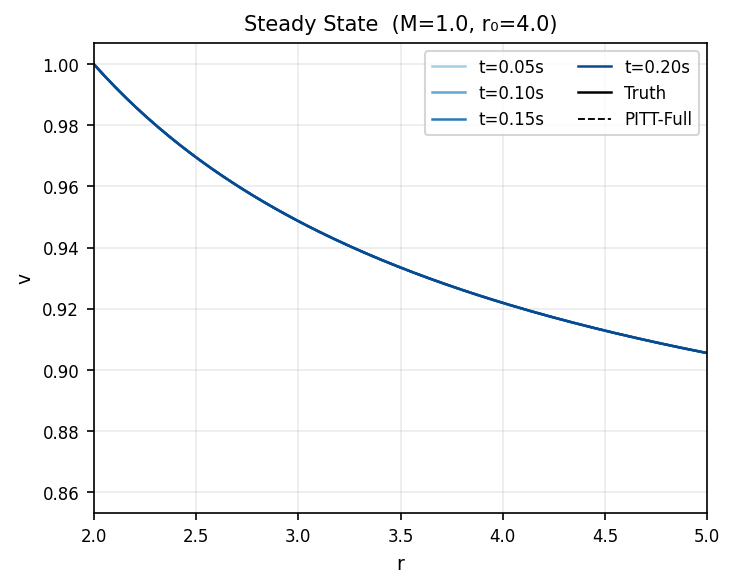}
\caption{Predicted velocity profiles at $t\in\{0.05,0.10,0.15,0.20\}$s
for three representative test cases ($M=1.0$, $r_0=4.0$). Solid lines:
exact solution; dashed lines: PITT-Full. Left: moving shock
($K_L=0.2$, $K_R=0.5$, $\varsigma_L=\varsigma_R=+1$); center: steady
shock ($K_L=K_R=0.3$, $\varsigma_L=-\varsigma_R=+1$); right: steady
state ($K_L=K_R=0.3$, $\varsigma_L=\varsigma_R=+1$). In all cases the
two curves are visually indistinguishable.}
\label{fig:trajectories}
\end{figure}

\begin{figure}[H]
\centering
\includegraphics[width=0.95\textwidth]{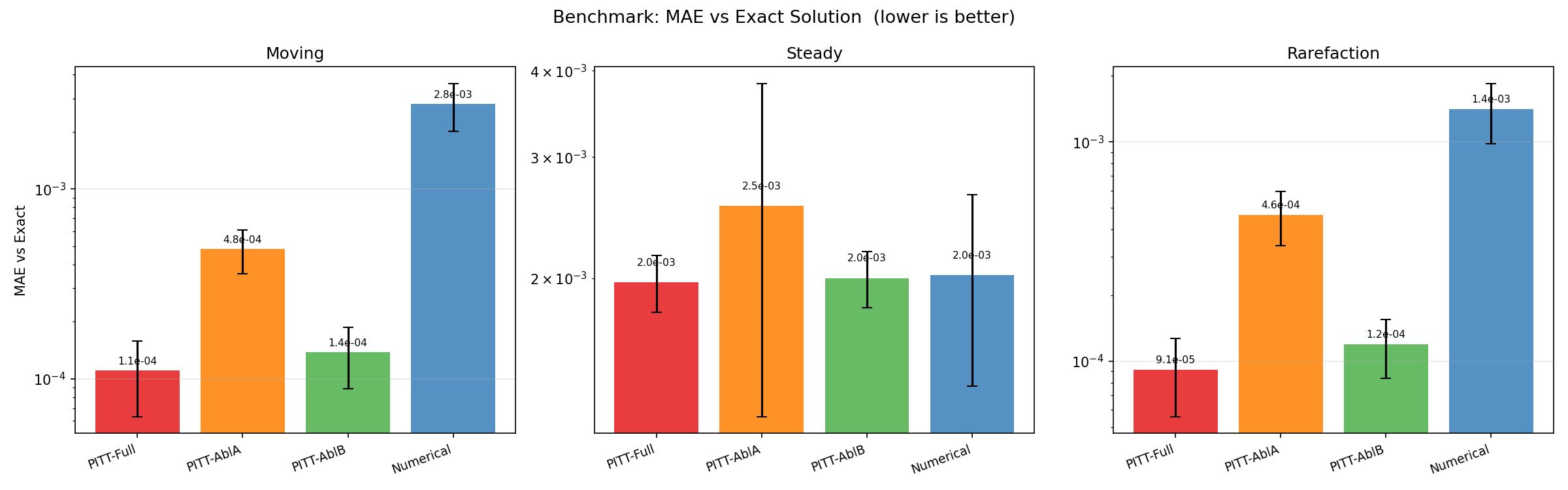}
\caption{Benchmark summary for the test set. The figure compares the
PITT prediction with the finite-volume baseline across the regimes used in
Table~\ref{tab:results}.}
\label{fig:benchmark}
\end{figure}


\subsection{Ablation study}
\label{section=5--2}

We compare the full model against the following three variants. \textbf{PITT-AblA}
removes the RH prior from the forward pass, replacing $\sigma_{RH}^{(t)}$
with a purely learned shock speed. \textbf{PITT-AblB} disables the
equation tokenizer by zeroing $\mathbf{e}_{\mathrm{phys}}$ and the token
cross-attention context. \textbf{PITT-AblC} removes the shock-width
annealing schedule, training instead with a fixed $w=0.003$ throughout.
Because some of these variants turn out to be sensitive to the random
initialization, we report results directly across three seeds ($2026$,
$1$, $42$) rather than for a single run. Table~\ref{tab:ablation-seeds}
reports the overall field MAE for every variant and every seed,
Table~\ref{tab:ablation-crossseed} summarizes the cross-seed mean and
standard deviation, and Table~\ref{tab:ablation-seeds-bytype} breaks
these results down by solution type.

\begin{table}[ht]
\centering
\caption{Overall field MAE for each variant under three random seeds.}
\begin{tabular}{lccc}
\hline
Model & seed $2026$ & seed $1$ & seed $42$ \\ \hline
PITT (full) & $2.88\text{e-}4$ & $2.88\text{e-}4$ & $3.12\text{e-}4$ \\
PITT-AblA & $6.24\text{e-}4$ & $6.99\text{e-}4$ & $2.91\text{e-}4$ \\
PITT-AblB & $3.17\text{e-}4$ & $3.07\text{e-}4$ & $2.99\text{e-}4$ \\
PITT-AblC & $2.85\text{e-}4$ & $1.75\text{e-}3$ & $2.91\text{e-}4$ \\
\hline
\end{tabular}
\label{tab:ablation-seeds}
\end{table}

\begin{table}[ht]
\centering
\caption{Cross-seed variation of the overall field MAE (mean $\pm$ std
across three seeds). AblC and AblA show large relative variation; AblB
is nearly insensitive to the seed over these three runs.}
\begin{tabular}{lccc}
\hline
Model & Cross-seed mean $\pm$ std & Relative variation & Worst seed / mean \\ \hline
PITT (full) & $2.96\text{e-}4 \pm 1.14\text{e-}5$ & $4\%$ & $1.05\times$ \\
PITT-AblA & $5.38\text{e-}4 \pm 1.77\text{e-}4$ & $33\%$ & $1.30\times$ \\
PITT-AblB & $3.08\text{e-}4 \pm 7.40\text{e-}6$ & $2\%$ & $1.03\times$ \\
PITT-AblC & $7.76\text{e-}4 \pm 6.91\text{e-}4$ & $89\%$ & $2.26\times$ \\
\hline
\end{tabular}
\label{tab:ablation-crossseed}
\end{table}

\begin{table}[ht]
\centering
\caption{Field MAE by solution type for each variant, across three
random seeds (mean $\pm$ std over test trajectories).}
\begin{tabular}{llccc}
\hline
Model & Seed & Moving & Steady & Rarefaction \\ \hline
PITT (full) & $2026$ & $1.11\text{e-}4 \pm 1.22\text{e-}4$ & $8.28\text{e-}4 \pm 3.61\text{e-}4$ & $5.63\text{e-}5 \pm 3.71\text{e-}5$ \\
PITT (full) & $1$    & $1.11\text{e-}4 \pm 1.22\text{e-}4$ & $8.28\text{e-}4 \pm 3.62\text{e-}4$ & $5.62\text{e-}5 \pm 3.75\text{e-}5$ \\
PITT (full) & $42$   & $1.39\text{e-}4 \pm 1.24\text{e-}4$ & $8.52\text{e-}4 \pm 3.61\text{e-}4$ & $7.48\text{e-}5 \pm 3.78\text{e-}5$ \\
\hline
PITT-AblA & $2026$ & $4.39\text{e-}4 \pm 2.09\text{e-}4$ & $1.13\text{e-}3 \pm 3.66\text{e-}4$ & $4.41\text{e-}4 \pm 1.13\text{e-}4$ \\
PITT-AblA & $1$    & $5.71\text{e-}4 \pm 1.59\text{e-}4$ & $1.21\text{e-}3 \pm 3.78\text{e-}4$ & $4.06\text{e-}4 \pm 8.47\text{e-}5$ \\
PITT-AblA & $42$   & $1.16\text{e-}4 \pm 1.22\text{e-}4$ & $8.30\text{e-}4 \pm 3.61\text{e-}4$ & $5.86\text{e-}5 \pm 3.75\text{e-}5$ \\
\hline
PITT-AblB & $2026$ & $1.41\text{e-}4 \pm 1.22\text{e-}4$ & $8.58\text{e-}4 \pm 3.60\text{e-}4$ & $8.57\text{e-}5 \pm 3.69\text{e-}5$ \\
PITT-AblB & $1$    & $1.31\text{e-}4 \pm 1.22\text{e-}4$ & $8.46\text{e-}4 \pm 3.62\text{e-}4$ & $7.55\text{e-}5 \pm 3.76\text{e-}5$ \\
PITT-AblB & $42$   & $1.23\text{e-}4 \pm 1.22\text{e-}4$ & $8.40\text{e-}4 \pm 3.61\text{e-}4$ & $6.80\text{e-}5 \pm 3.70\text{e-}5$ \\
\hline
PITT-AblC & $2026$ & $1.08\text{e-}4 \pm 1.22\text{e-}4$ & $8.25\text{e-}4 \pm 3.61\text{e-}4$ & $5.38\text{e-}5 \pm 3.76\text{e-}5$ \\
PITT-AblC & $1$    & $1.51\text{e-}3 \pm 1.08\text{e-}3$ & $2.28\text{e-}3 \pm 1.19\text{e-}3$ & $1.66\text{e-}3 \pm 1.05\text{e-}3$ \\
PITT-AblC & $42$   & $1.14\text{e-}4 \pm 1.23\text{e-}4$ & $8.30\text{e-}4 \pm 3.61\text{e-}4$ & $5.92\text{e-}5 \pm 3.75\text{e-}5$ \\
\hline
\end{tabular}
\label{tab:ablation-seeds-bytype}
\end{table}

\subsubsection{Effect of the RH prior}
Removing the RH prior causes a large degradation at two
of the three seeds: the overall MAE increases by $117\%$ and $143\%$
relative to PITT-full at seeds $2026$ and $1$ respectively, with the
sharpest relative impact on rarefaction waves and moving shocks. At the
third seed ($42$), AblA performs close to PITT-full. Without the
analytic prior provided by Proposition~\ref{prop:closed}, the model must
infer the shock speed from training data, which is substantially harder
given only $300$ training trajectories, and the resulting optimization
problem is more sensitive to initialization.

\subsubsection{Effect of the equation tokenizer}
Removing the tokenizer produces a smaller but consistent degradation
across all three seeds: the overall MAE increases by $7$--$10\%$, with
the largest relative impact on rarefaction waves and moving shocks. The
FNO encoder and the analytic RH prior together provide a strong
inductive bias that partially compensates for the absence of symbolic
encoding, but the full model remains more accurate at every seed,
confirming that symbolic equation tokenization contributes a measurable,
if secondary and seed-independent, signal.

\subsubsection{Effect of the shock-width annealing schedule}
At two of the three seeds, PITT-AblC (fixed $w=0.003$ throughout
training) performs essentially as well as PITT-full, or even slightly
better. At the remaining seed, however, training converges to a
solution that is worse than PITT-full by roughly an order of magnitude
across \emph{every} solution type simultaneously -- moving shocks,
steady shocks, and rarefaction waves all degrade together
(Table~\ref{tab:ablation-seeds-bytype}), rather than one wave regime
being selectively affected. This across-the-board collapse indicates a
genuine optimization failure rather than a regime-specific weakness, and
parallels the behavior of PITT-AblA: the annealing schedule is not
primarily an accuracy-improving component, but a mechanism that removes
a failure mode in which the (initially very sharp) transition-width
optimization problem can converge to a poor local optimum depending on
initialization.

\subsubsection{Summary}
Taken together, these results separate the three architectural
components into two groups. The RH prior (AblA) and the shock-width
annealing schedule (AblC) primarily affect training robustness: removing
either one leaves the optimization problem vulnerable, under some
initializations, to a substantially worse local optimum, while under
other initializations the variant performs close to the full model. The
equation tokenizer (AblB) instead affects accuracy in a consistent,
seed-independent way, contributing a modest but reliable gain. A
single-seed ablation study would not be sufficient to assess the role of the RH prior and could miss the annealing schedule.

A more complete ablation program could also compare the present
architecture with variants in which the FNO encoder, the residual field
correction, or the steady-state variable $K$ are removed separately, and
could extend the cross-seed analysis above to a larger number of seeds
to better characterize the failure probability associated with removing
the RH prior or the annealing schedule.


\subsection{Non-relativistic limit: standard Burgers equation}
\label{section=5--3}

\subsubsection{Flat-spacetime reduction}

Taking the limit as the black-hole mass $M\to0$, equation
\eqref{burgers-schwarzschild} reduces to the standard inviscid Burgers
equation
\be
\label{eq:standard-burgers}
\partial_t u + \partial_r\!\left(\frac{u^2}{2}\right) = 0. 
\ee
The Rankine--Hugoniot condition gives the shock speed
$\sigma_{RH}=(u_L+u_R)/2$, and the steady states reduce to the constants
$u_{K,\varsigma}=\varsigma\sqrt{1-K}$. This limit is recovered exactly in
the PITT prior: when $g(r)=1-2M/r\to1$, the flux reduces to
$F=(u^2-1)/2$ and the RH condition gives
\be
\label{eq:standard-burgers-rh-speed}
\sigma_{RH} = \frac{[F]}{[u]}
= \frac{u_R^2 - u_L^2}{2(u_R - u_L)}
= \frac{u_R + u_L}{2},
\ee
identically.

\subsubsection{Numerical check}

We verify this numerically by running PITT for
$M\in\{10^{-4},10^{-5},10^{-6}\}$. Table~\ref{tab:standard} reports the
full results. The field MAE remains essentially constant across these
values of $M$, while the shock-speed discrepancy decreases linearly with
$M$, as expected from the expansion of the Schwarzschild metric coefficient
$g(r)$.

\begin{table}[ht]
\centering
\caption{Standard Burgers validation ($M=10^{-6}$, $T=0.5$).
PITT vs RK3+MUSCL+HLL numerical baseline.}
\begin{tabular}{lcccc}
\hline
Case & $\sigma_{RH}$ & $|\Delta\sigma|$ & PITT MAE & Numerical MAE \\ \hline
Moving shock ($u_L=0.8$, $u_R=0.3$)
  & $0.550$ & $1.31\times10^{-7}$ & $3.5\times10^{-3}$ & $4.8\times10^{-3}$ \\
Rarefaction ($u_L=0.3$, $u_R=0.8$)
  & --- & --- & $1.9\times10^{-3}$ & $1.16\times10^{-2}$ \\
Steady shock ($u_L=0.5$, $u_R=-0.5$)
  & $0$ & $0$ & $6.9\times10^{-3}$ & $9.5\times10^{-3}$ \\ \hline
\end{tabular}
\label{tab:standard}
\end{table}
For the moving shock case, the predicted shock speed exhibits
empirical first-order convergence to the standard Burgers RH value as
$M\to0$:
\be
\label{eq:standard-burgers-speed-convergence}
|\Delta\sigma| \approx 1.3\times10^{-1} \cdot M,
\ee
as confirmed by the measurements
\be
\label{eq:standard-burgers-speed-measurements}
\begin{aligned}
M=10^{-4}&:\;|\Delta\sigma|=1.28\times10^{-5},\\
M=10^{-5}&:\;|\Delta\sigma|=1.26\times10^{-6},\\
M=10^{-6}&:\;|\Delta\sigma|=1.31\times10^{-7}.
\end{aligned}
\ee


\section{Long-time convergence toward steady states}
\label{section=6}

\subsection{Piecewise steady-state solutions}
\label{section=6--1}
 
\subsubsection{Algorithm for general solutions}

{Given the initial condition $u_0(r)$, we proceed as follows.}
\begin{enumerate}
\item {Compute $K(r)=(1-u_0^2)/g(r)$ pointwise.}
\item Detect discontinuities of $K$ by thresholding
      $|\partial_r K| > \tau$; detections within $5$ grid points
      are merged to avoid double-counting.
\item For each segment $[r_{i-1}, r_i)$, set $K^i$ to the segment
      mean of $K(r)$ and $\varsigma^i$ to the sign of the segment
      mean of $u_0$.
\item {For each adjacent pair
      $(K^i,\varsigma^i)$, $(K^{i+1},\varsigma^{i+1})$, solve the
      corresponding generalized Riemann problem: shock interfaces are
      advanced by \eqref{eq:shock-ode}, while rarefaction interfaces are
      advanced by the associated fan characteristics.}
\item {Reconstruct the full field by piecewise evaluation of
      $u_{K^i,\varsigma^i}(r)$, with sigmoid transitions of width $w$ at
      shock locations and smooth fan reconstruction in rarefaction
      regions.}
\end{enumerate}
{
This procedure follows the spirit of the Glimm method based on generalized
Riemann problems~\cite{LeFloch2019, LeFloch2016}: at each time step
$\Delta t$, the field is decomposed into piecewise steady-state segments
via the invariant $K(r)$ and the sign branch $\varsigma$. Each interface
generates a local Riemann subproblem, and the local solutions are
reassembled before the next step. To keep the pre-interaction regime
unambiguous within one time step, we impose the CFL condition}
\be
\label{eq:cfl}
\frac{\Delta t}{\Delta r} \leq \frac{1}{\displaystyle\sup_{r\in[r_{\min},r_{\max}]}g(r)}
= \frac{1}{g(r_{\max})},
\ee
{which is sufficient here since characteristic speeds are bounded
by $g(r)|u|\leq g(r)$ and $|u|\leq1$ by Corollary~\ref{cor:bounded}.
Under this condition, wave interactions are handled by re-decomposition at
successive time steps, in agreement with the front-tracking and generalized
Riemann-solver theory developed by LeFloch and
Xiang~\cite{LeFloch2016}.}

\subsubsection{Numerical results for general solutions}

All experiments in this
section use $T=0.2$ with $\Delta t=0.01$ and $\Delta r =
(r_{\max}-r_{\min})/N_r$, satisfying \eqref{eq:cfl}.

{Table~\ref{tab:general} reports the field MAE and shock position
MAE for three test cases with $M=1.0$, and Figure~\ref{fig:general} shows
the corresponding velocity profiles and shock trajectories.}

\begin{table}[ht]
\centering
\caption{{PITT-General: field MAE and shock position MAE for
general bounded-variation initial data ($M=1.0$, $T=0.2$, pre-collision regime). Shock
MAE is reported per discontinuity.}}
\begin{tabular}{llcc}
\hline
Case & $K$ values & Field MAE & Shock MAE \\ \hline
2-disc
  & $(0.2\,|\,0.5\,|\,0.7)$
  & $1.03\text{e-}4$
  & $3.84\text{e-}3\;/\;4.60\text{e-}3$ \\
3-disc
  & $(0.2\,|\,0.4\,|\,0.6\,|\,0.8)$
  & $1.48\text{e-}4$
  & $3.89\text{e-}3\;/\;4.81\text{e-}3\;/\;4.81\text{e-}3$ \\
Smooth IC
  & $K\approx0.35$ + perturbation $|\delta u|<0.02$
  & $4.58\text{e-}3$
  & --- \\ \hline
\end{tabular}
\label{tab:general}
\end{table}

{
Two observations follow from Table~\ref{tab:general} and
Figure~\ref{fig:general}. First, the field MAE for the 2- and
3-discontinuity cases ($1.03\times10^{-4}$ and $1.48\times10^{-4}$) is
comparable to the single-discontinuity result in Table~\ref{tab:results}
($1.11\times10^{-4}$). Thus, at this resolution and before wave
interaction, the decomposition into local generalized Riemann problems
does not visibly degrade the accuracy of the single-interface solver.
Second, for the smooth initial condition, consisting of a steady-state
profile perturbed by $|\delta u|<0.02$, the field MAE is
$4.58\times10^{-3}=O(|\delta u|)$. This error reflects the initial
projection of a genuinely smooth perturbation onto a reduced steady-state
description, rather than an instability of the PITT rollout.}

These tests for rather general solutions should be interpreted as interaction tests of the
local generalized Riemann decomposition. The more stringent regime is
the post-collision evolution, in which two or more fronts interact and
the steady-state parameters must be reselected after the interaction.
This regime is numerically accessible with the same decomposition
strategy, but it requires a separate error study because the dominant
error is then no longer only the single-front propagation error. We
regard this post-interaction regime as the next necessary benchmark for
extending the method from isolated generalized Riemann problems to fully general flows, as noted among the limitations discussed in
\autoref{section=8}.

\begin{figure}[H]
\centering
\includegraphics[width=\textwidth]{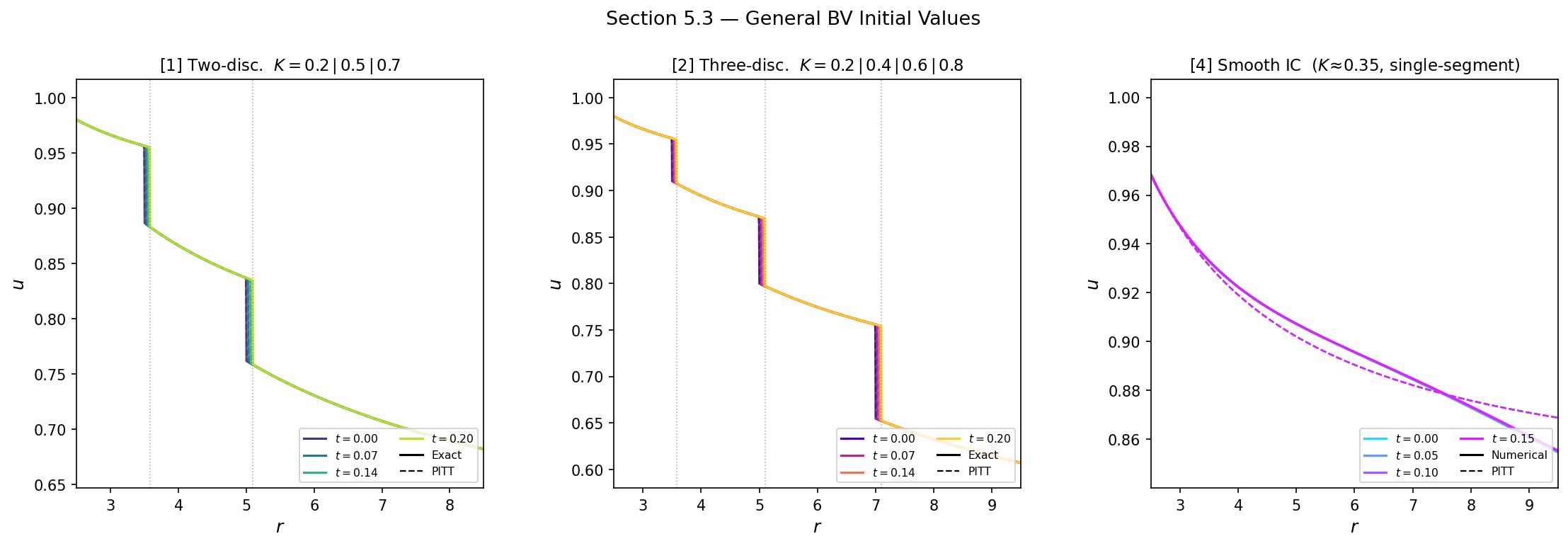}
\caption{{PITT-General on general bounded-variation initial data ($M=1.0$).
\textbf{Left:} two-discontinuity case ($K=0.2\,|\,0.5\,|\,0.7$);
solid lines are the exact solution, dashed lines are PITT.
\textbf{Center:} three-discontinuity case ($K=0.2\,|\,0.4\,|\,0.6\,|\,0.8$);
solid lines are the exact solution, dashed lines are PITT.
In both cases the two curves are visually indistinguishable
(field MAE $\sim10^{-4}$); vertical dotted lines mark the
initial discontinuity positions.
\textbf{Right:} smooth initial condition (steady-state
$K\approx0.35$ with small perturbation $|\delta u|<0.02$);
the field MAE of $4.58\times10^{-3}$ is $O(|\delta u|)$,
attributable to the initial approximation rather than the
PITT rollout.}}
\label{fig:general}
\end{figure} 


\subsection{Asymptotic behavior of solutions}
\label{section=6--2}

\subsubsection{Purpose of the asymptotic tests}

We now study the late-time behavior of solutions to the relativistic
Burgers equation \eqref{burgers-schwarzschild} on a Schwarzschild
background, providing numerical support for the asymptotic regimes
described in \cite{LeFloch2014}. All experiments use $M=1$, $r\in[2,4]$, $N=256$, and the finite-volume
scheme of \autoref{section=4--3}, which is well-balanced to machine
precision away from the horizon. For perturbed steady shocks, the limiting
shock location depends on the mass displaced by the perturbation; this
motivates the separate conservation-based discussion in
\autoref{section=7}.

\subsubsection{Perturbed smooth steady state}

We take the steady state
$u_*(r)=-\sqrt{1-K\,g(r)}$ with $K=0.5$ as initial data, and add a
compactly supported sinusoidal perturbation on $r\in[2.2,2.8]$.
Figure~\ref{fig:asymptotic9} shows the evolution at
$t\in\{0.2,0.5,2,4\}$. The perturbation is gradually absorbed toward the
black-hole horizon, and the solution converges back to the same steady
state.

\subsubsection{Perturbed steady shock}

We take the steady shock with $K=0.5$ and $r_0=2.5$ as initial data, with
compactly supported perturbations added on both sides of the discontinuity.
Figure~\ref{fig:asymptotic10} shows the evolution at
$t\in\{0.1,2,5,10\}$. The perturbation dissipates and the solution
converges to a steady shock with the same $K$ value, possibly at a shifted
location $r_1\neq r_0$, consistent with Conclusion~6.1 of
\cite{LeFloch2014}.

\subsubsection{General data with horizon trace \(u_0(2M)=1\)}

We prescribe a piecewise initial velocity with $u_0(2M)=1$ and
$u_0(+\infty)=1$. Figure~\ref{fig:asymptotic11} shows the evolution at
$t\in\{0.1,1,5,8,10,40\}$. The solution develops a single shock connecting
the left state $u=1$ to the escape velocity profile
$u^-_*(r)=-\sqrt{2M/r}$, consistent with Claim~1.2(1) of
\cite{LeFloch2014}. The relatively large $L^1$ distance
($5.0\times10^{-1}$ at $T=40$) reflects the contribution of the left
region where $u\approx1$, not a failure of convergence.

{
\subsubsection{General data with positive far-field trace}

We prescribe $u_0(r)=0.5+\delta(r)$ for $r<3$ and $u_0(r)=0.08$ for $r>3$,
where $\delta$ is a small cosine bump on $[2.0,2.6]$.
Figure~\ref{fig:asymptotic12} shows the evolution at
$t\in\{0.1,1,4,10,15,20\}$. Since $u_0(2M)<1$ and
$u_0(+\infty)=0.08>0$, the solution converges to the escape velocity
profile $u^-_*(r)=-\sqrt{2M/r}$, consistent with Claim~1.2(2) of
\cite{LeFloch2014}. At $T=20$ the $L^1$ distance is
$1.57\times10^{-2}$, showing significantly faster convergence than in the
case $u_0(2M)=1$.}

\begin{figure}[H]
\centering
\includegraphics[width=\textwidth]{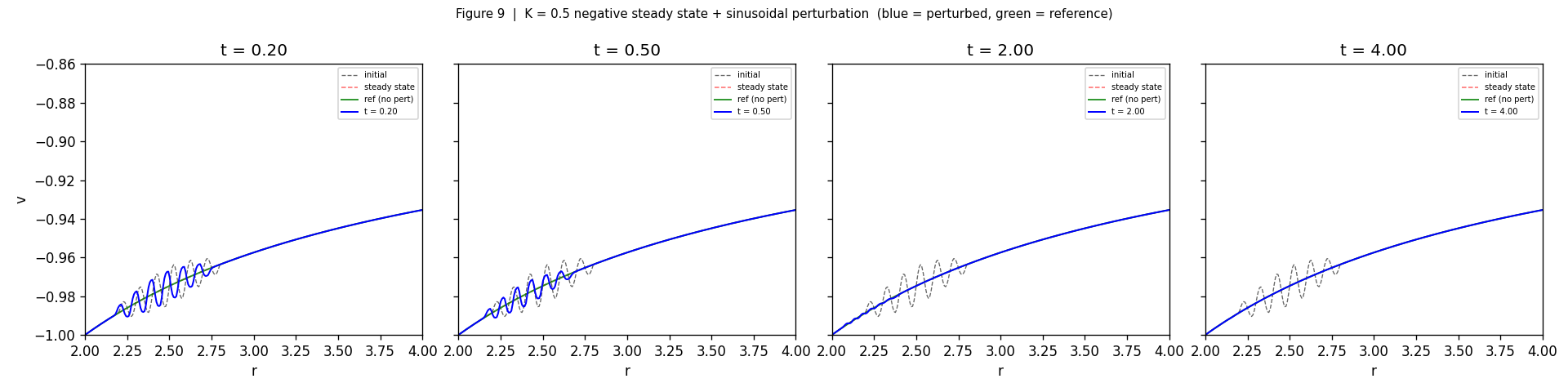}
\caption{Asymptotic behavior: perturbed smooth steady state ($K=0.5$,
$M=1$). Blue: perturbed solution; green: unperturbed reference;
red dashed: analytical steady state. The perturbation is absorbed
toward the horizon and the solution returns to the steady state.}
\label{fig:asymptotic9}
\end{figure}

\begin{figure}[H]
\centering
\includegraphics[width=\textwidth]{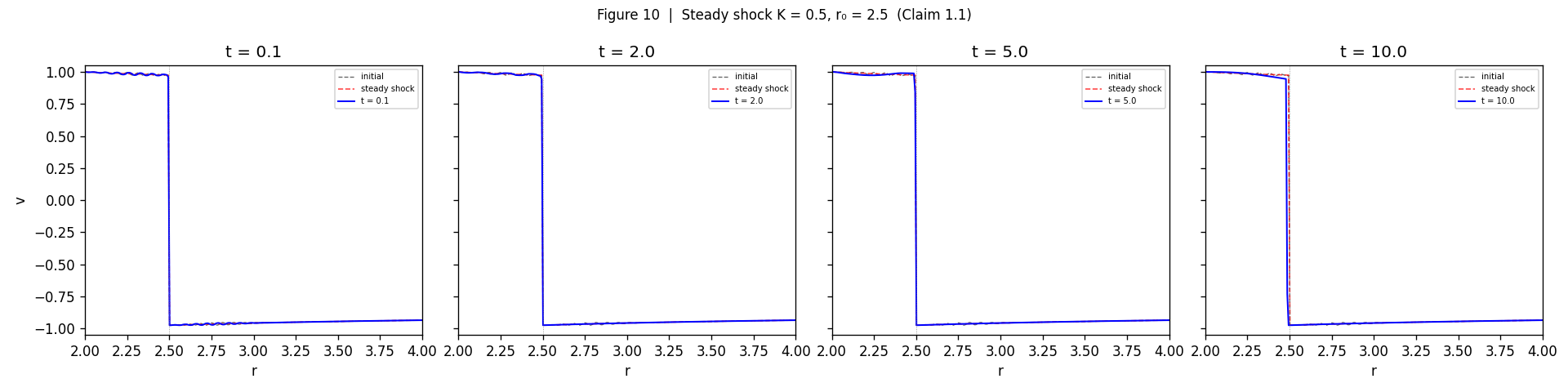}
\caption{Asymptotic behavior: perturbed steady shock ($K=0.5$,
$r_0=2.5$, $M=1$). The perturbation dissipates and the solution
converges to a steady shock at a possibly shifted location
$r_1\neq r_0$.}
\label{fig:asymptotic10}
\end{figure}

\begin{figure}[H]
\centering
\includegraphics[width=\textwidth]{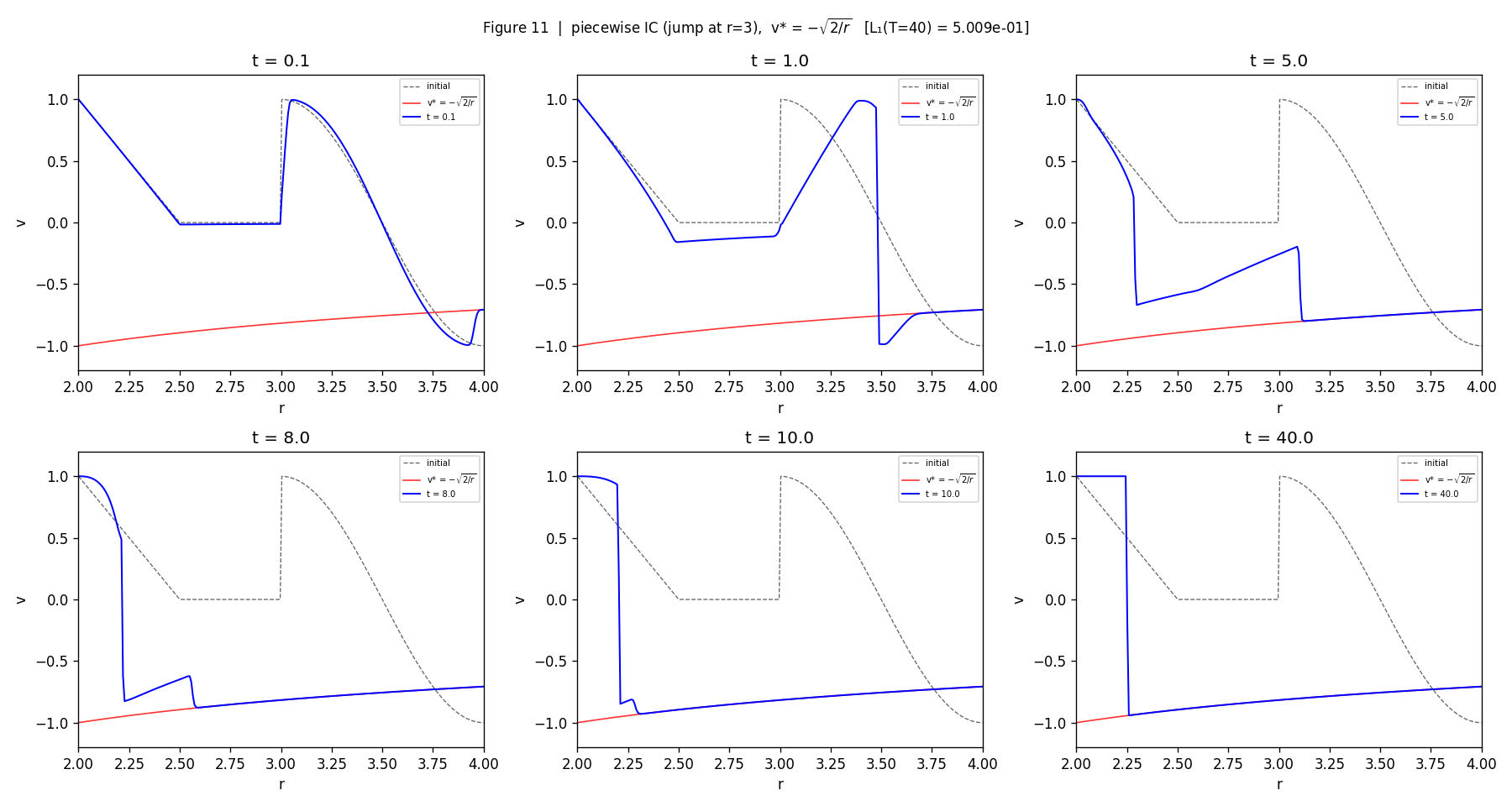}
\caption{{Asymptotic behavior: general initial data with
$u_0(2M)=1$ and $u_0(+\infty)=1$ ($M=1$, $T=40$). The solution converges
to a single shock connecting the left state $u=1$ to the escape velocity
profile $u^-_*(r)=-\sqrt{2M/r}$ (red), consistent with Claim~1.2(1) of
\cite{LeFloch2014}.}}
\label{fig:asymptotic11}
\end{figure}

\begin{figure}[H]
\centering
\includegraphics[width=\textwidth]{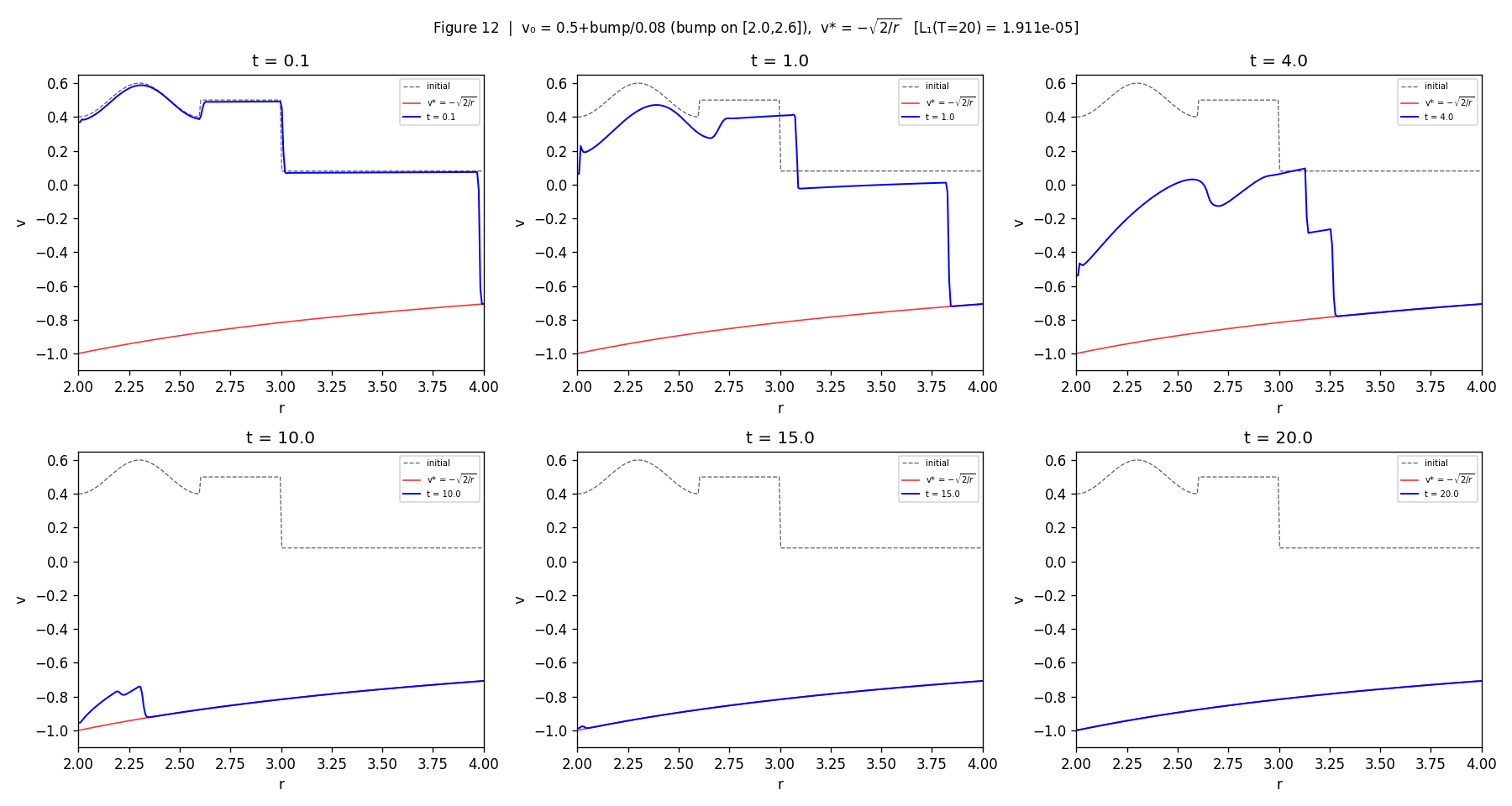}
\caption{{Asymptotic behavior: general initial data with
$u_0(2M)<1$ and $u_0(+\infty)=0.08>0$ ($M=1$, $T=20$). The solution
converges to the escape velocity profile $u^-_*(r)=-\sqrt{2M/r}$ (red),
consistent with Claim~1.2(2) of \cite{LeFloch2014}. $L^1$ distance at
$T=20$: $1.57\times10^{-2}$.}}
\label{fig:asymptotic12}
\end{figure}

{
\subsubsection{General data with nonpositive far-field trace}

We prescribe $u_0(r)=0.6+0.02\cos(2\pi(r-2))$ for $r\in[2,3]$ and
$u_0(r)=-0.6+0.02\cos(3\pi(r-3))$ for $r\in[3,4]$, with
$u_0^\infty=-0.6\leq0$. Figure~\ref{fig:asymptotic13} shows the evolution
at $t\in\{0.1,1,4,8,10,20\}$. The solution converges to}
\be
\label{eq:asymptotic-nonpositive-far-field-state}
u^*(r) = -\sqrt{1-(1-(u_0^\infty)^2)\,g(r)}
= -\sqrt{1-0.64\,g(r)},
\ee
{consistent with Claim~1.2(3) of \cite{LeFloch2014}. The
convergence is rapid: the $L^1$ distance at $T=20$ is
$1.05\times10^{-5}$.}

\begin{figure}[H]
\centering
\includegraphics[width=\textwidth]{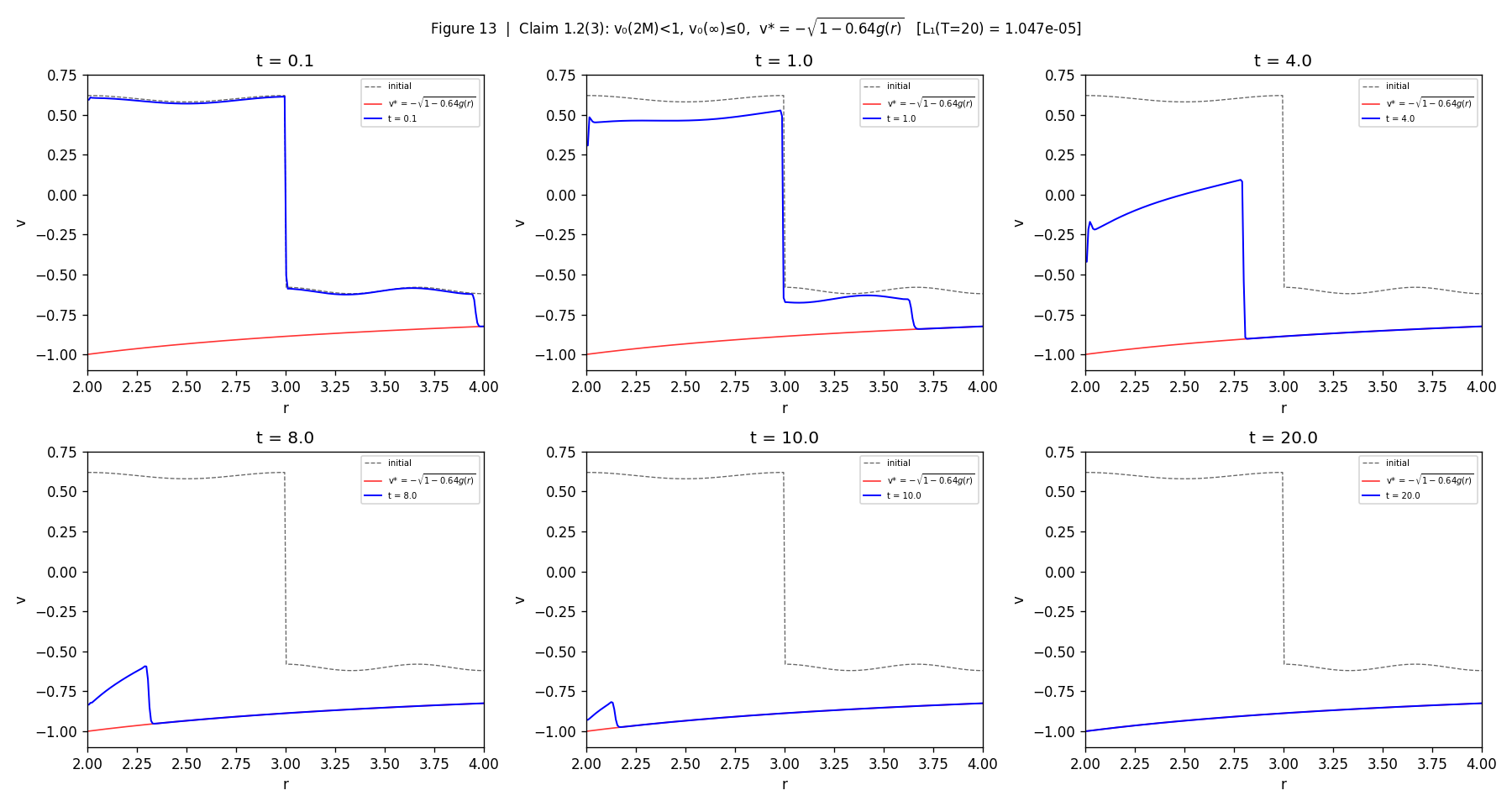}
\caption{{Asymptotic behavior: general initial data with
$u_0(2M)<1$ and $u_0(+\infty)=-0.6\leq0$ ($M=1$, $T=20$). The solution
converges to $u^*(r)=-\sqrt{1-0.64\,g(r)}$ (red), consistent with
Claim~1.2(3) of \cite{LeFloch2014}. $L^1$ distance at
$T=20$: $1.05\times10^{-5}$.}}
\label{fig:asymptotic13}
\end{figure}


\section{Mass conservation and shock displacement for perturbed steady shocks}
\label{section=7}

\subsection{Mass balance for perturbed steady shocks}
\label{section=7--1}

\subsubsection{Question addressed in this section}

We now isolate one asymptotic question suggested by the numerical
experiments above. Consider a steady shock for the Schwarzschild--Burgers
equation, and perturb it by a compactly supported disturbance. If the
solution relaxes as $t\to\infty$ to another steady shock in the same
steady-state family, we seek what determines the limiting shock position. 
LeFloch, Par\'es, and Pimentel~\cite{lefloch2021classwellbalancedalgorithmsrelativistic}
observed numerically that, for small perturbations, the displacement is
approximately linear in the perturbation amplitude. We show here that the
underlying relation follows directly from the conservative form of the
equation and can be used as a sharp diagnostic for PITT predictions.

\subsubsection{Conservative density}

Equation~\eqref{burgers-2} was introduced as a balance law with source
term $S$, but it is equivalently written in the conservative form
\be
\label{eq:conservative-form}
\partial_t\!\left(\frac{u}{g(r)^2}\right)
+ \partial_r\!\left(\frac{u^2-1}{2g(r)}\right) = 0.
\ee
Thus the conserved density is not $u$ itself, but the weighted density
$u/g(r)^2$. On a finite computational interval $[a,b]$, with boundary
fluxes either prescribed or negligible over the relaxation window, the
weighted mass provides the scalar constraint that determines the final
shock location.

\subsubsection{Perturbed steady shock}

To match the notation used in Figure~\ref{fig:displacement}, we write
$\kappa_0=K_0^2$ for the steady-state parameter in this subsection and set
\(U_{\kappa_0}(r):=\sqrt{1-\kappa_0 g(r)}\).
The reference steady shock is
\be
\label{eq:steady-shock-ic}
u_0(r) = \begin{cases}
U_{\kappa_0}(r), & 2M<r<r_0,\\
-U_{\kappa_0}(r), & r>r_0.
\end{cases}
\ee
We perturb $u_0$ by a compactly supported disturbance $\delta(r)$, and
write the initial field as $u_0+\delta$. If the perturbed solution
converges to a steady shock in the same $\kappa_0$-family, with shock
position $r_1$, then \eqref{eq:conservative-form} gives the exact identity
\be
\label{eq:mass-balance}
\int_a^b \frac{u_0(r)}{g(r)^2}\,dr
+ \int_a^b \frac{\delta(r)}{g(r)^2}\,dr
= \int_a^b \frac{u_\infty(r)}{g(r)^2}\,dr,
\ee
where $u_\infty$ denotes the limiting steady shock profile. Since $u_0$
and $u_\infty$ agree away from the interval between $r_0$ and $r_1$, and
have opposite signs on that interval, \eqref{eq:mass-balance} reduces to a
single scalar equation for $r_1$.


\subsection{Displacement law}
\label{section=7--2}

\subsubsection{Implicit formula}

\begin{claim}[Mass-conservation displacement law]
\label{thm:displacement}
Let $u_0$ be the steady shock~\eqref{eq:steady-shock-ic} and let
$\delta$ be a compactly supported perturbation with
$\Delta m := \int \delta(r)/g(r)^2\,dr$. If the perturbed solution
converges as $t\to\infty$ to a steady shock in the same $\kappa_0$-family
with shock position $r_1$, then $r_1$ satisfies the exact implicit
equation; more precisely, one has
\be
\label{eq:star}
\int_{r_0}^{r_1} \frac{2\,U_{\kappa_0}(r)}{g(r)^2}\,dr = \Delta m.
\ee
For small $|\Delta m|$, the displacement is linear to leading order:
\be
\label{eq:linear}
r_1-r_0 \;\approx\; \frac{g(r_0)^2}{2\,U_{\kappa_0}(r_0)}\;\Delta m.
\ee
\end{claim}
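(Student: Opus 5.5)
The plan is to integrate the conservative form \eqref{eq:conservative-form} over a fixed window $[a,b]$ with $2M<a<\min(r_0,r_1)$ and $b>\max(r_0,r_1)$, chosen so that it contains the support of $\delta$. Writing $m(t):=\int_a^b u(t,r)\,g(r)^{-2}\,dr$, the weak formulation gives
\[
m(T)-m(0) = -\int_0^T \bigl[\tfrac{u^2-1}{2g}\bigr]_{r=a}^{r=b}\,dt .
\]
This holds for entropy weak solutions because the Rankine--Hugoniot relation \eqref{eq:rh-balance} makes the interior jumps contribute nothing. The first step is to argue that the boundary fluxes net to zero. At both $r=a$ and $r=b$ the solution eventually coincides with the unperturbed steady branch $\pm U_{\kappa_0}$, for which $(u^2-1)/(2g)=-\kappa_0/2$ is the \emph{same constant} on both sides. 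Indeed, by \eqref{eq:stationary-balance-ode} this flux is constant along every member of the $\kappa_0$-family, whatever the sign $\varsigma$. So once $a,b$ lie outside the region reached by the perturbation, the two boundary contributions cancel identically.

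The main obstacle is precisely this cancellation during the transient. Characteristics carry the perturbation toward the horizon, and part of it may cross $r=a$. I would handle this in one of two ways. The first is to adopt the hypothesis of the paper's setup, namely that boundary fluxes are prescribed or negligible over the relaxation window, so that \eqref{eq:mass-balance} holds. The alternative is to choose $a$ and $b$ in the regions where finite propagation speed ($|dr/dt|\le g$, by Corollary~\ref{cor:bounded}) and the convergence assumption keep $u(t,\cdot)=\pm U_{\kappa_0}$ for all $t$. In either case, letting $T\to\infty$ and using the assumed $L^1$ convergence to $u_\infty$ yields \eqref{eq:mass-balance}.

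The second step is to evaluate $\int (u_\infty-u_0)g^{-2}$. Both profiles equal $U_{\kappa_0}$ left of $\min(r_0,r_1)$ and $-U_{\kappa_0}$ right of $\max(r_0,r_1)$. Suppose $r_1>r_0$. On $(r_0,r_1)$ we have $u_\infty=U_{\kappa_0}$ and $u_0=-U_{\kappa_0}$, so the difference is $2U_{\kappa_0}$. If instead $r_1<r_0$, the difference is $-2U_{\kappa_0}$ on $(r_1,r_0)$, which is the same quantity once written as an oriented integral $\int_{r_0}^{r_1}$. This gives \eqref{eq:star}.

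The third step is the linearization. Set $\Phi(r_1):=\int_{r_0}^{r_1}2U_{\kappa_0}g^{-2}\,dr$. Then $\Phi(r_0)=0$ and $\Phi'(r_0)=2U_{\kappa_0}(r_0)/g(r_0)^2>0$, provided the steady shock is nondegenerate, i.e.\ $U_{\kappa_0}(r_0)>0$. Since $\Phi$ is strictly increasing near $r_0$, the implicit/inverse function theorem gives a unique solution $r_1=\Phi^{-1}(\Delta m)$ with
\[
r_1-r_0=\frac{g(r_0)^2}{2U_{\kappa_0}(r_0)}\,\Delta m+O(\Delta m^2),
\]
which is \eqref{eq:linear}.
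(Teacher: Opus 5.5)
Your proposal is correct and follows the paper's own route: integrate the conservative form \eqref{eq:conservative-form} over a window $[a,b]$ to obtain \eqref{eq:mass-balance}, use the sign structure of $u_0$ and $u_\infty$ between $r_0$ and $r_1$ to reduce it to \eqref{eq:star}, and then linearize using $U_{\kappa_0}(r_0)>0$. Your observation that the conservative flux $(u^2-1)/(2g)$ equals $-\kappa_0/2$ on both branches of the $\kappa_0$-family usefully sharpens the paper's ``negligible boundary flux'' hypothesis into an exact cancellation; however, your second alternative (finite propagation speed plus convergence as $t\to\infty$) does not by itself keep the boundary traces on the $\kappa_0$-family for all $t$, so the argument actually rests on your first option, which is the same boundary-flux hypothesis the paper uses.
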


\begin{proof}
\bse
Immediate from \eqref{eq:mass-balance} and the sign structure of
the two steady shocks described above; \eqref{eq:linear} follows by
linearizing the left-hand side of \eqref{eq:star} in $r_1-r_0$.
\ese
\end{proof}

\subsubsection{Width of the linear regime}

\begin{claim}[Dependence of the linear regime on $\kappa_0$]
\label{prop:displacement-nonlinearity}
Let \(h(r)=2U_{\kappa_0}(r)/g(r)^2\) denote the integrand in
\eqref{eq:star}. For the relative variation of $h$ at $r_0$, one has
\be
\label{eq:displacement-nonlinearity-rate}
\frac{h'(r_0)}{h(r_0)}
= -\frac{g'(r_0)}{2\,U_{\kappa_0}(r_0)}
\left[
\frac{\kappa_0}{U_{\kappa_0}(r_0)}
+\frac{4\,U_{\kappa_0}(r_0)}{g(r_0)}
\right].
\ee
Consequently the characteristic displacement window on which the linear approximation
\eqref{eq:linear} remains accurate is of order
\be
\label{eq:linear-window-scale}
|r_1-r_0| \lesssim \frac{h(r_0)}{|h'(r_0)|}.
\ee
Equivalently, the corresponding weighted-mass window is of order
\(\displaystyle |\Delta m|\lesssim h(r_0)^2/|h'(r_0)|\).  For fixed $M$ and $r_0$, this window decreases as $\kappa_0$ increases.
\end{claim}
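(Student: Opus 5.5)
The plan is a direct logarithmic differentiation of $h$, followed by a second-order Taylor expansion of the left-hand side of \eqref{eq:star}, and finally a monotonicity check in $\kappa_0$. Throughout I assume $\kappa_0 g(r_0)<1$, so that $U_{\kappa_0}(r_0)>0$ and $h$ is smooth near $r_0$. Recall also that $g'(r)=2M/r^2>0$.

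\emph{Step 1 (the rate).} Write $\log h=\log 2+\log U_{\kappa_0}-2\log g$, so that $h'/h=U_{\kappa_0}'/U_{\kappa_0}-2g'/g$. From $U_{\kappa_0}^2=1-\kappa_0 g$ we get $U_{\kappa_0}'=-\kappa_0 g'/(2U_{\kappa_0})$. Hence
\[
\frac{h'}{h}=-g'\Bigl[\frac{\kappa_0}{2U_{\kappa_0}^2}+\frac{2}{g}\Bigr]
=-\frac{g'}{2U_{\kappa_0}}\Bigl[\frac{\kappa_0}{U_{\kappa_0}}+\frac{4U_{\kappa_0}}{g}\Bigr].
\]
Evaluating at $r_0$ gives \eqref{eq:displacement-nonlinearity-rate}. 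This step is routine.

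\emph{Step 2 (the window).} Set $\Delta r:=r_1-r_0$ and expand \eqref{eq:star} by Taylor's formula:
\[
\int_{r_0}^{r_1}h\,dr=h(r_0)\,\Delta r+\tfrac12 h'(r_0)\,\Delta r^2+O(\Delta r^3).
\]
The linear law \eqref{eq:linear} keeps only the first term. Its relative error is therefore $|h'(r_0)|\,|\Delta r|/(2h(r_0))$, up to higher order. This is small exactly when $|\Delta r|\lesssim h(r_0)/|h'(r_0)|$, which is \eqref{eq:linear-window-scale}. To leading order $\Delta m\approx h(r_0)\Delta r$, so the corresponding window in weighted mass is $|\Delta m|\lesssim h(r_0)^2/|h'(r_0)|$.

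These are order-of-magnitude statements. I would present them as such, with the third-order remainder controlled by $\sup|h''|$ on a neighbourhood of $r_0$ where $U_{\kappa_0}$ stays bounded away from zero.

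\emph{Step 3 (monotonicity in $\kappa_0$).} Fix $M$ and $r_0$. Using $U_{\kappa_0}^2=1-\kappa_0 g$, we have
\[
\Bigl|\frac{h'}{h}\Bigr|=g'\Bigl[\frac{\kappa_0}{2(1-\kappa_0 g)}+\frac{2}{g}\Bigr].
\]
The map $\kappa_0\mapsto \kappa_0/(1-\kappa_0 g)$ is strictly increasing on $[0,1/g)$. So $|h'/h|$ increases with $\kappa_0$, and the displacement window $h/|h'|$ decreases.

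For the mass window, write $h^2/|h'|=h\cdot(h/|h'|)$. Here $h=2U_{\kappa_0}/g^2$ is decreasing in $\kappa_0$, because $U_{\kappa_0}$ is. Thus $h^2/|h'|$ is a product of two positive decreasing factors and is itself decreasing.

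The only point needing care is this sign and domain bookkeeping, namely $U_{\kappa_0}>0$ and $g'>0$. The main conceptual caveat is that "window" is meant heuristically. It is the scale at which the quadratic Taylor term becomes comparable to the linear one, not a sharp inequality.
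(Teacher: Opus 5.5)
Your proposal is correct and matches the paper's proof step for step: logarithmic differentiation of $h$ using $U_{\kappa_0}'=-\kappa_0 g'/(2U_{\kappa_0})$, the second-order expansion of \eqref{eq:star} about $r_0$, and monotonicity of $\kappa_0\mapsto\kappa_0/(1-\kappa_0 g(r_0))$. You also write $h(r_0)^2/|h'(r_0)|$ as the product of $h(r_0)$ and $h(r_0)/|h'(r_0)|$, two positive factors that both decrease in $\kappa_0$; this is a worthwhile addition, because the paper's proof only shows explicitly that the displacement window shrinks, not the weighted-mass window the statement mentions last.
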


\begin{proof}
\bse
Differentiating $U_{\kappa_0}(r)=\sqrt{1-\kappa_0 g(r)}$ gives
$U_{\kappa_0}'(r)=-\kappa_0 g'(r)/(2U_{\kappa_0}(r))$.
Therefore, we obtain 
\be
\label{eq:proof-displacement-nonlinearity-derivative}
\frac{h'(r)}{h(r)}
= \frac{U_{\kappa_0}'(r)}{U_{\kappa_0}(r)} - \frac{2g'(r)}{g(r)}
= -\frac{\kappa_0 g'(r)}{2U_{\kappa_0}(r)^2}
  -\frac{2g'(r)}{g(r)},
\ee
which is equivalent to \eqref{eq:displacement-nonlinearity-rate}. Expanding
\eqref{eq:star} about $r_1=r_0$ gives
\be
\label{eq:proof-displacement-expansion}
\Delta m=h(r_0)(r_1-r_0)
+\frac12 h'(r_0)(r_1-r_0)^2+O((r_1-r_0)^3),
\ee
so the relative size of the first nonlinear correction is controlled by
$|h'(r_0)|/h(r_0)$. Finally,
$U_{\kappa_0}(r_0)^2=1-\kappa_0 g(r_0)$ decreases with $\kappa_0$, and
$\kappa_0/U_{\kappa_0}(r_0)^2$ is increasing on the admissible range. Hence
$|h'(r_0)/h(r_0)|$ increases with $\kappa_0$, which shrinks the linear
window.
\ese
\end{proof}
 

\subsection{Numerical experiments}
\label{section=7--3}

\subsubsection{Sampling protocol}

Each training sample is drawn by sampling
$M\sim\mathcal{U}(0.8,1.2)$,
$\kappa_0=K_0^2\sim\mathcal{U}(0.2,0.8)$, and
$r_0\sim\mathcal{U}(2.4,5.0)$, subject to $r_0>2M+0.3$. The steady shock
\eqref{eq:steady-shock-ic} is represented on
$r\in[2M+0.001,8.0]$ with $N=512$ points. The perturbation is a Gaussian
bump
\be
\label{eq:gaussian-perturbation}
\delta(r)=A\exp\!\left(-\frac12\left(\frac{r-r_c}{\sigma}\right)^2\right),
\ee
where $A\sim\mathcal{U}(-0.7,0.7)$ and
$\sigma\sim\mathcal{U}(0.05,0.4)$. The center $r_c$ is drawn, with equal
probability, from either
$[2M+3\sigma,r_0-3\sigma]$ or $[r_0+3\sigma,8.0-3\sigma]$. The perturbed
profile is clipped to $[-1,1]$, and samples whose numerical evolution does
not converge by $T_{\max}=25$ are discarded. The resulting validation set
contains $100$ samples, with $\Delta m\in[-22.2,7.4]$.

\subsubsection{Residual diagnostic}

To compare different candidates for the limiting location $r_1$, we
substitute each candidate into the exact identity~\eqref{eq:star}. This
gives the relative equation residual
\be
\label{eq:displacement-residual}
\varepsilon(r_1) := \frac{|H(r_1)-\Delta m|}{|\Delta m|},
\qquad
H(r_1)=\int_{r_0}^{r_1}\frac{2U_{\kappa_0}(r)}{g(r)^2}\,dr.
\ee
We evaluate this residual for three candidates: the linear approximation
\eqref{eq:linear}, the PITT prediction, and the finite-volume long-time
simulation $r_1^{\mathrm{sim}}$ reported in
\cite{lefloch2021classwellbalancedalgorithmsrelativistic}. The results are
summarized in Table~\ref{tab:equation-residual}, after binning the samples
according to $|\Delta m|$.

\begin{table}[ht]
\centering
\caption{Relative equation residual $\varepsilon(r_1)$ for three
candidates, stratified by $|\Delta m|$.}
\begin{tabular}{lccc}
\hline
$|\Delta m|$ bin& Linear & PITT & Numerical sim \\ \hline
$<1$  & $1.55\%$ & $7.56\%$  & $12.83\%$ \\
$[1,5)$ & $6.46\%$ & $3.69\%$  & $4.70\%$ \\
$\ge5$   & diverges & $18.32\%$ & $60.34\%$ \\ \hline
\end{tabular}
\label{tab:equation-residual}
\end{table}

\subsubsection{Interpretation of the residuals}

Three observations follow. First, the linear approximation's residual
diverges for $|\Delta m|\ge5$. Second, and most notably, the
\emph{simulation's own} residual reaches $60.34\%$ for $|\Delta m|\ge5$
-- substantially larger than at small $|\Delta m|$. Because this column
uses only $r_1^{\mathrm{sim}}$ (no model prediction at all), it is not
measuring prediction error; it is measuring how far the actual
long-time simulation outcome deviates from the same-$\kappa_0$-branch
assumption underlying Claim~\ref{thm:displacement}. A residual this
large is evidence that, for large perturbations, the perturbed solution
may no longer be relaxing to a steady shock within the original
$\kappa_0$-family at all, consistent with the regime-switching phenomenon
flagged in Remark~\ref{rem:branch-switching}. Third, PITT's residual is
smaller than the numerical simulation's residual in every bin
($7.56\%$ vs.\ $12.83\%$, $3.69\%$ vs.\ $4.70\%$, and $18.32\%$ vs.\
$60.34\%$).

\begin{remark}[Domain validity and branch switching]
\label{rem:branch-switching}
When $\Delta m$ is large enough that \eqref{eq:star} has no solution
within the admissible domain $D_{\kappa_0}$, the
assumption that the system relaxes to a shock within the same
$\kappa_0$-family breaks down; the perturbed solution then transitions to a
qualitatively different asymptotic branch, consistent with the
regime-change phenomena reported in~\cite{lefloch2021classwellbalancedalgorithmsrelativistic}.
We flag such cases explicitly (Figure~\ref{fig:displacement},
crimson markers) rather than extrapolating \eqref{eq:star} outside its
range of validity.
\end{remark}
\begin{figure}[t]
\centering
\includegraphics[width=0.95\textwidth]{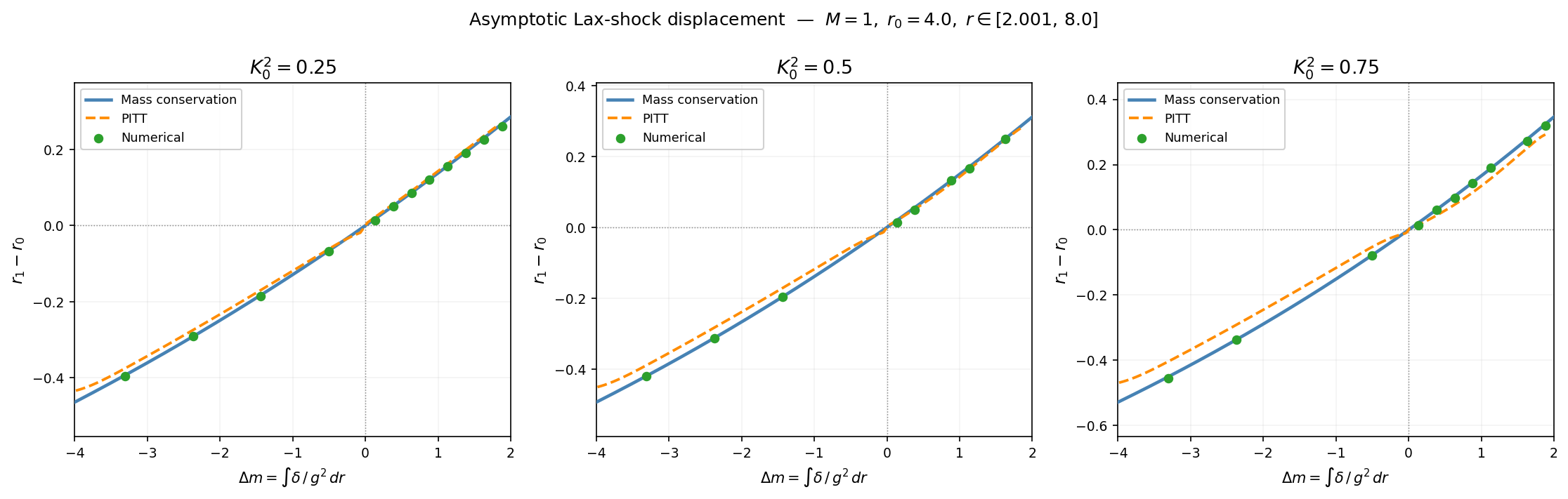}
\caption{Asymptotic shock displacement $r_1-r_0$ vs.\ weighted
perturbation mass $\Delta m=\int\delta(r)/g(r)^2\,dr$, at
$M=1,\,r_0=4.0$, for $K_0^2=0.25$ (in-distribution), $0.5$, and $0.75$.
Each panel shows $\Delta m\in[-4,2]$, well within the regime where the
mass-conservation formula~\eqref{eq:star} admits a solution.}
\label{fig:displacement}
\end{figure}

\subsubsection{Effect of the steady-state parameter}

Table~\ref{tab:nonlinearity} evaluates
\eqref{eq:displacement-nonlinearity-rate} for the three values of
$K_0^2=\kappa_0$ used in Figure~\ref{fig:displacement}. The characteristic
window $1/|h'(r_0)/h(r_0)|$ over which \eqref{eq:linear} remains accurate
shrinks monotonically as $\kappa_0$ increases, consistent with the
increasing curvature of the displacement curve across the three panels.
The mechanism is that larger $\kappa_0$ pushes
$U_{\kappa_0}(r_0)$ closer to zero, so the same weighted mass perturbation
produces a larger nonlinear response. This is the same sensitivity that
shrinks the outward threshold $\Delta m_{\max}$ in the last column of
Table~\ref{tab:nonlinearity}; both effects reflect proximity to the
boundary of $D_{\kappa_0}$ and are quantified by
Proposition~\ref{prop:displacement-nonlinearity}.

\begin{table}[h]
\centering
\caption{Nonlinearity rate~\eqref{eq:displacement-nonlinearity-rate}, characteristic
linear-regime window, and outward displacement threshold
$\Delta m_{\max}$ (Remark~\ref{rem:branch-switching}), all at $M=1,\,r_0=4.0$, for the three
values of $K_0^2$ shown in Figure~\ref{fig:displacement}. The linear
slope column matches the values already used to construct
Figure~\ref{fig:displacement}, confirming consistency with the earlier
computation.}
\begin{tabular}{cccccc}
\hline
$K_0^2$ & $U_{\kappa_0}(r_0)$ & Linear slope~\eqref{eq:linear} & $|h'(r_0)/h(r_0)|$ & Linear window $1/|h'/h|$ & $\Delta m_{\max}$ \\ \hline
0.25 & 0.935 & 0.1336 & 0.518 & 1.93 & 17.84 \\
0.50 & 0.866 & 0.1443 & 0.542 & 1.85 & 16.06 \\
0.75 & 0.791 & 0.1581 & 0.575 & 1.74 & 14.04 \\ \hline
\end{tabular}
\label{tab:nonlinearity}
\end{table}


\section{Concluding remarks}
\label{section=8}

We have developed a shock-aware PITT method for the
Schwarzschild--Burgers equation, using the explicit steady-state structure
and Rankine--Hugoniot dynamics of the model inside the neural rollout. The
numerical tests indicate that this analytical prior has the \emph{dominant role in
accuracy} near moving shocks, while equation tokenization and learned
corrections account for \emph{geometric and finite-resolution effects.}

The scalar nonlinear hyperbolic model studied here is simple from the viewpoint of hyperbolic
theory, but it does contain the features needed for the present purpose: a
non-trivial geometry, steady states, moving and stationary discontinuities,
rarefaction waves, and long-time relaxation. It therefore gives a useful
test case for PITT methods applied to fluid models with complex geometry or
complex dynamics.

The present results should also be read with their natural limitations.
The training data use the exact generalized Riemann structure of a scalar
balance law, and the numerical validation is mainly focused on
single-front dynamics and pre-interaction data. While the finite-volume
baseline used here already exhibits well-balanced behavior to machine
precision away from the horizon (\autoref{section=4--3}), a complete
assessment of the method will require comparison with solvers validated
specifically near the horizon, post-interaction tests, and repeated
runs with several random seeds. These points are not obstacles to the
construction proposed here; rather, they identify the next benchmarks
needed before the method can be regarded as a general-purpose solver for
hyperbolic balance laws.

The companion article~\cite{LeFlochXiang-2026b} will treat the
corresponding Schwarzschild--Euler fluid flows, and pursue the development of our PITT method. That problem requires handling a genuine system, several wave families, and interactions between
geometry, characteristic structure, and shock dynamics.

\paragraph*{Acknowledgments}

PLF was supported by the project ANR-23-CE40-0010-02 funded by the Agence Nationale de la Recherche, and the MSCA Staff Exchange Project 101131233 funded by the European Research Council. 



\appendix

\section{Machine learning terminology}
\label{section=A}

This appendix recalls a few machine learning-related terms used in the paper.

\paragraph{PINN.}
\emph{Physics-informed neural network}s were introduced as a way to train neural
networks while penalizing the \emph{residual} of a differential equation. They are
now a standard tool in scientific machine learning, especially for 
PDE problems and inverse problems, when sufficient smoothness is available. For \emph{discontinuous} solutions to nonlinear hyperbolic problems,
plain residual minimization is often insufficient, which is why the present
article uses \emph{analytical shock information} (based on steady states) inside the architecture.

\paragraph{Neural operator.}
A neural operator is a learning architecture designed to approximate maps
between function spaces, rather than maps between finite-dimensional
vectors alone. This viewpoint became prominent in the last decade and is
well adapted to parametrized PDEs, where the input may be an initial
condition, a coefficient, or a forcing term, and the output is a solution
field. In this article, the PITT methodology follows these lines but with \emph{additional
structure} (see next) from hyperbolic balance laws.

\paragraph{FNO.}
The Fourier neural operator is a neural-operator architecture based on
\emph{global Fourier modes}. It became a standard reference architecture for
learning solution operators of PDEs on grids. FNO layers are useful because
they encode \emph{nonlocal spatial information} efficiently, although Fourier
representations alone can be \emph{delicate near shocks}. Here the FNO component
encodes the sampled field and the geometry, while \emph{shock motion is handled
separately.} 

\paragraph{Transformer.}
A Transformer is a neural architecture built around attention mechanisms.
It was first developed in natural language processing and later became
standard across many domains involving structured sequences. In PDE
learning, the sequence may encode spatial points, equation tokens, or
latent variables. In this paper, the Transformer processes symbolic
equation tokens and supplies conditioning information to the rollout.

\paragraph{Token and equation tokenization.}
A token is a discrete symbol used as an elementary input to a Transformer.
Equation tokenization means representing the mathematical expression of a
PDE, its variables, coefficients, parameters, and structural relations as a
sequence of such symbols. This idea is recent in scientific machine
learning and aims to let the network distinguish equations by their
structure. Here tokenization records the balance law, the geometry, and the
shock relation.

\paragraph{Embedding and latent representation.}
An embedding is a learned vector representation of an object such as a
token, a parameter, or a sampled field value. A latent representation is an
internal set of variables used by the neural network, not necessarily meant
to coincide with a physical variable. These notions are standard in modern
machine learning. In this article, they are used to store information about
the equation, the field, and the Schwarzschild geometry during the rollout.

\paragraph{Self-attention and cross-attention.}
Self-attention lets elements of the same sequence exchange information;
cross-attention lets one representation query another representation. These
mechanisms are now standard in Transformer architectures. In the present
method, token summaries carry equation-level information, while the spatial
latent field carries solution-level information. Cross-attention transfers
the former into the latter at each update stage.

\paragraph{PITT.}
PITT stands for physics-informed token transformer. The idea is recent and
combines equation tokenization with Transformer-style processing of the
symbolic representation of a PDE. In the present article, PITT is adapted
to a setting with \emph{shock waves} by adding a \emph{Rankine--Hugoniot prior} and
\emph{steady-state profiles}. Thus the method is not only data-driven; it is also
guided by the analytical structure of the balance law.

\paragraph{Encoder and decoder.}
An encoder maps raw input data into latent variables, while a decoder maps
latent variables back to an output quantity of interest. This terminology
is standard in machine learning and appears in many architectures beyond
PDE learning. In the present setting, the field encoder processes the
initial profile and geometry, while the reconstruction stage plays the role
of a decoder for the predicted solution field.

\paragraph{Residual correction.}
A residual correction is a learned term added to a prescribed analytical
or numerical structure. This idea is common in hybrid scientific machine
learning, where a reliable model supplies the leading behavior and the
network learns what remains. In this paper, the Rankine--Hugoniot and
steady-state priors determine the dominant wave structure, while bounded
neural corrections account for discretization and unresolved effects.

\paragraph{Ablation study.}
An ablation study removes or disables selected components of a model in
order to measure their individual effect. This practice is standard in
machine learning, where architectures often contain several interacting
modules. In this article, ablations are used to compare the full PITT model
with variants without the Rankine--Hugoniot prior or without equation
tokenization. The results quantify which pieces of physics are most useful.

\paragraph{Training, validation, and test sets.}
These three datasets serve different roles in the learning protocol. The
training set is used to optimize the network parameters, the validation set
is used to tune choices and monitor generalization during training, and the
test set is reserved for final evaluation. This terminology is standard in
machine learning. In the present article, the split is chosen to test
generalization across physical parameters and solution regimes.

\paragraph{Out-of-distribution generalization.}
Out-of-distribution generalization refers to the behavior of a learned
model on parameters, geometries, or data regimes not fully represented
during training. The notion is central in modern machine learning because
good interpolation on a training distribution does not automatically imply
reliable extrapolation. For PDE applications, it is especially relevant
when the model is tested at new physical parameters or new wave
configurations. The present experiments include such tests for the
Schwarzschild--Burgers setting.

\end{document}